\newcommand{\CC}{{\mathbb{C}}}
\newcommand{\QQ}{{\mathbb{Q}}}
\newcommand{\RR}{{\mathbb{R}}}
\newcommand{\calS}{{\cal S}}
\newcommand{\E}[2]{{E_{#1}^{#2}}}
\newcommand{\F}[2]{{F_{#1}^{#2}}}
\newcommand{\Jac}{\mathrm{Jac}}
\newcommand{\bx}{{\bf x}}
\newcommand{\bz}{{\bf z}}
\newcommand{\hess}{\mathrm{hess}}
\newcommand{\res}{\mathrm{res}}
\newcommand{\wt}{\mathrm{wt}}
\def\A{{\mathcal A}}
\def\C{{\mathcal C}}
\def\E{{\mathcal E}}
\def\F{{\mathcal F}}
\def\G{{\mathcal G}}
\def\H{{\mathcal H}}
\def\L{{\mathcal L}}
\def\O{{\mathcal O}}
\def\T{{\mathcal T}}
\renewcommand{\S}{{\mathcal S}}
\def\calS{{\mathcal S}}
\newcommand{\p}{{\partial}}
\newcommand{\eps}{\varepsilon}
\newcommand{\Id}{\mathrm{Id}}
\newcommand{\ZZ}{\mathbb{Z}}
\newcommand{\bs}{{\bf s}}
\newcommand{\PV}{{\mathrm{PV}}}
\newcommand{\GM}{{\mathrm{GM}}}
\newcommand{\End}{{\mathrm{End}}}
\def\ns{{\nabla}\hspace{-1.2mm}{\text{\footnotesize{\bf /}}}}
\newtheorem{theorem}{Theorem}[section]
\newtheorem{proposition}[theorem]{Proposition}
\newtheorem{lemma}[theorem]{Lemma}
\newtheorem{corollary}[theorem]{Corollary}
\newtheorem{example}[theorem]{Example}
\newtheorem{remark}[theorem]{Remark}
\def\&{\vspace{-5pt}&}
\numberwithin{equation}{section}
\begin{document}

\title{Elements of Saito theory via Batalin--Vilkovisky algebras}

\author{Alexey Basalaev}
\address{A. Basalaev:
\newline
Faculty of Mathematics, HSE University, Usacheva str., 6, 119048 Moscow, Russian Federation
}
\email{abasalaev@hse.ru}

\date{\today}

 \begin{abstract}
    Saito theory associates to a quasihomogeneous isolated singularity the structure of a Dubrovin--Frobenius manifold. This structure is not unique, depending on the special choice of a primitive form or, equivalently, a good basis.
    We study primitive forms and respective Dubrovin--Frobenius manifolds via BV-algebras. In particular, we give recursive formulae for the primitive form of K. Saito and the R-matrix of Givental using BV-algebra computations.
 \end{abstract}
 \maketitle


\section{Introduction}

Dubrovin--Frobenius manifolds were introduced by B. Dubrovin and have appeared as an important tool in such areas of mathematics as integrable systems, mathematical physics, and mirror symmetry. The sources of Dubrovin--Frobenius manifolds include formal deformation theory, cohomological field theory, and Saito theory of isolated singularities (cf. \cite{M}). In this text we focus on the last one, building up the connection to the previous two.

\subsection{Dubrovin--Frobenius manifold of a singularity}
A Dubrovin--Frobenius manifold is the set of data $(M,\circ,\eta,e)$, where $M$ is a complex manifold, $\circ$ is a fiberwise $\O_M$--bilinear commutative and associative product on its holomorphic tangent sheaf $\T_M$, $\eta$ is a non--degenerate $\O_M$--bilinear form on $\T_M$, such that the Frobenius algebra property $\eta(X \circ Y, Z) = \eta(X,Y\circ Z)$ holds for any $X,Y,Z \in \T_M$. It is required that the Levi-Civita connection $\ns$ of $\eta$ is flat, and the unit vector field $e$ of the product is $\ns$--flat. Let $t^\alpha$ be flat coordinates and $\p / \p t^\alpha$ the dual vector fields. Denote by $C := \sum_\alpha \left( \frac{\p}{\p t^\alpha} \circ \right) dt^\alpha$ the $\T_M$--endomorphism-valued $1$--form. It is required that $\ns C = 0$. Moreover both $\eta$ and $\circ$ are required to be quasihomogeneous with respect to some Euler vector field $E$.

To any Dubrovin--Frobenius manifold one can associate the connection on $M \times \CC^\ast_z$ given by
\begin{align}\label{eq: Dubrovin connection-1}
    \widetilde \nabla_X := \nabla_X + \frac{1}{z} C_X, \quad \widetilde \nabla_{\frac{d}{dz}} := \frac{d}{dz} + \frac{1}{z} \left( B_0 + \frac{B_\infty}{z} \right)
\end{align}
for $C_X(Y) := X \circ Y$, $B_0(Y) := E \circ Y$ and the diagonal grading operator $B_\infty$.

This new connection is flat. It provides an important piece of data of a Dubrovin--Frobenius manifold. Conversely, given a manifold $M$ with a flat pairing $\eta$ and a fiberwise product $\circ$ on $\T_M$ satisfying the Frobenius algebra property, if $\widetilde \nabla$ is flat, then $M$ is a Dubrovin--Frobenius manifold (cf. Proposition 2.1 in \cite{D2} and \cite{Sa}).

The structure of every Dubrovin--Frobenius manifold can be encoded by just one function $\F = \F(t_1,\dots,t_\mu)$ called its potential.
The associativity of the product $\circ$ then implies that $\F$ is subject to a big system of PDEs called the \textit{WDVV equation}.

\subsubsection{Saito--Frobenius manifold}
Consider a quasihomogeneous polynomial $f \in \CC[x_1,\dots,x_N]$ with $0 \in \CC^N$ --- the only critical point. It defines a Jacobian algebra
\[
 \Jac(f) := \CC[x_1,\dots,x_N] / (\frac{\p f}{\p x_1},\dots \frac{\p f}{\p x_N})
\]
that is a finite--dimensional algebra with a pairing called the \textit{residue pairing}. Let $\phi_1(\bx),\dots, \phi_\mu(\bx) \in \CC[x_1,\dots,x_N]$ be polynomials whose classes generate $\Jac(f)$. Introduce the new function
\[
 F(\bx,\bs) := f(\bx) + \sum_{\alpha=1}^\mu s_\alpha \phi_\alpha(\bx)
\]
with parameters $s_1,\dots,s_\mu$ varying in a small open neighbourhood of the origin $\calS \subset \CC^\mu$. We call $F$ the unfolding of $f$ and $\calS$ the unfolding space.


It is well--known that the unfolding space $\calS$ carries the structure of a Dubrovin--Frobenius manifold (cf. \cite{ST,D1,D2,Sa, H}). To construct it, one considers the $\CC[z]$--module
\[
\H_f^{(0)} := \Omega^N_{\CC^N}[z] / (zd + df \wedge) \Omega^{N-1}_{\CC^N}[z],
\]
called the \textit{Brieskorn lattice}, and its completion $\H_f := \H_f^{(0)} \otimes_{\CC[z]} \CC((z^{-1}))$. Assuming $f$ varies in a family given by the unfolding, we get the $\O_\calS \otimes \CC[z]$--module $\H_F^{(0)}$ and its completion $\H_F$.
Both $\H_f^{(0)}$ and $\H_F^{(0)}$ are free rank $\mu$ modules over $\CC[z]$ and $\O_\calS \otimes \CC[z]$ respectively. However, they also have a $\CC[\bx]$--module structure given by $p(\bx) \cdot [\phi] d^N\bx = [p \phi] d^N\bx$.

The space $\H_F$ is endowed with the connection $\nabla^\GM: \T_\calS \otimes \H_F \to \H_F $ proposed by K. Saito:
\begin{align*}
    & \nabla^\GM_{X} [\phi]d^N\bx := [X \cdot \phi(\bx) + \frac{1}{z} (X \cdot F)\phi(\bx)]d^N\bx,
    \\
    & \nabla^\GM_{\frac{d}{dz}} [\phi]d^N\bx := [\frac{d\phi}{dz} + \frac{1}{z^2} F \phi - \frac{1}{z} N \phi ]d^N\bx,
\end{align*}
where $X \in \T_\calS$ and $X\cdot$ stands for the directional derivative.
This connection looks similar to that of Eq.~\eqref{eq: Dubrovin connection-1}; however, there are two major differences: 1. it is not a connection on $\T_\calS\otimes \CC[z,z^{-1}]]$; 2. its $z$--dependence is much more complicated. These two issues are resolved with the help of K. Saito's \textit{primitive form}.

\subsubsection{Primitive form and good basis}
A primitive form is a special element $\zeta \in \H_F^{(0)}$ such that the map $\sigma: \T_\calS \to \H_F^{(0)} / z \H_F^{(0)}$, $X \mapsto z \nabla^\GM_X$ is an isomorphism and the connection $\sigma^{-1} \cdot \nabla^\GM \cdot \sigma$ has the form of Eq.~\eqref{eq: Dubrovin connection-1} (cf. \cite{SK2,ST}), giving a Dubrovin--Frobenius manifold structure on~$\calS$.

Another way to construct the Dubrovin--Frobenius manifold structure on $\S$ is by taking a special choice of an $\O_\S[z]$--basis $\omega_1,\dots,\omega_\mu$ of $\H_F^{(0)}$, called a \textit{good basis}. Computed in this basis, the connection $\nabla^\GM$ takes the form of Eq.~\eqref{eq: Dubrovin connection-1}.

Existence of a primitive form was proved by M. Saito (cf. \cite{SM1,SM2}). He also showed that primitive forms are in one-to-one correspondence with good bases.

We will denote the Dubrovin--Frobenius manifold of $f$ fixed by the good basis $\omega_\bullet$ by $M_f^\omega$ and its potential by $\F^\omega$.


\subsection{BV--algebra for a hypersurface singularity}
Let $\PV^{i} := \wedge^i \T_{\CC^N}$ and $\PV := \sum_{i \ge 0} \PV^{i}$. It has an algebra structure given by the wedge product. Construction with a holomorphic volume form on $\CC^N$ gives an isomorphism $\PV^i \cong \Omega_{\CC^N}^{N-i,0}$ to the space of holomorphic $(N-i)$--forms. Denote by $\p: \PV^{i} \to \PV^{i-1}$ the operator lifted from the holomorphic differential on $\Omega_{\CC^N}^{\ast,0}$.
Then $\p^2 = 0$, but $\p$ does not satisfy the Leibniz rule, being a 2nd order operator. It defines a Gerstenhaber bracket on $\PV$ by
\[
\lbrace \alpha,\beta \rbrace := \p(\alpha \cdot \beta) - (\p \alpha) \cdot \beta - (-1)^{ |\alpha|} \alpha \cdot \p (\beta).
\]

For $f$ as in the preceding section, consider another differential $\overline\p_f$ on the same space $\PV$. Set $\overline\p_f := \lbrace f, \cdot \rbrace$. Then
\[
    H^\ast(\PV,\overline\p_f) \cong \Jac(f) \quad \text{ and } \quad H^\ast(\PV[z], \overline\p_f + z \p) \cong \H_f^{(0)}.
\]
The operators $\overline\p_f$ and $\p$ serve as a differential and a BV--operator on $\PV$ respectively, giving the BV--algebra $(\PV,\overline{\p}_f,\p)$. A similar BV--algebra was used in  \cite{LLS}, however with antiholomorphic directions as well. We do not use them here, but keep the notation.

With the help of the unfolding function $F$, the BV--algebra above can be extended to a new BV--algebra $(\PV\otimes\O_\S,\overline \p_F,\p)$.
This BV--algebra can be endowed with a connection $\nabla^\PV$ similar to that of $\nabla^\GM$.

Consider the $\PV \otimes \O_\S((z^{-1}))$--operators
\begin{align*}
    & \nabla_v^\PV (a) := v(a) + z^{-1} a v(F), \quad \forall v \in \T_\S,
    \\
    & \nabla_{\frac{\p}{\p z}}^\PV (a) := \frac{\p a}{\p z} - z^{-2} a F.
\end{align*}
Direct computations show that $[\nabla_v^\PV, \overline\p_F + z  \p] = 0$ and $ [\nabla_{\frac{\p}{\p z}}^\PV, \overline \p_F + z \p] = z^{-1} (\overline \p_F + z \p)$, showing that $\nabla^\PV$ is well--defined on the cohomology $H^\ast( \PV \otimes \O_\S((z^{-1})), \bar \p_F + z\p)$.

\subsubsection{BV operator trivialization}
An operator $\Phi \in \End(\PV[z])$ is called a \textit{BV--operator trivialization} of $(\PV,\overline{\p}_f,\p)$ if
\[
    \Phi (\overline{\p}_f + z \p) = \overline{\p}_f \Phi.
\]
In \cite{KMS} Khoroshkin, Markarian and Shadrin constructed a quasi-isomorphism between the hypercommutative operad and the operad of BV--algebras with resolved BV--operator.

Having fixed the trivialization above, their construction allows us to associate to any $k$--tuple of cohomology classes $[\phi_\bullet] \in H^\ast(\PV,\overline \p_f)$ the number $\langle [\phi_1],\dots,[\phi_k] \rangle^\Phi$, such that the generating function $\F^\Phi$ of all such numbers is a solution to the WDVV equation.

The starting question of the research performed in this paper was to find the trivialization such that $\F^\Phi$ coincides with $\F^\omega$.

\subsection{In this paper}
We prove that the BV--algebra of a quasihomogeneous singularity trivializes. We construct a specific trivialization $\Phi^\omega$ of the BV--operator associated to every good basis $\omega_1,\dots,\omega_\mu$ and show that the Dubrovin connection $\nabla^\omega$ of $M_f^\omega$ is an essential lift of the connection $\nabla^\PV$ (Proposition~\ref{prop: connections match} in the text).

In Theorem~\ref{theorem: primitive form} we give a recursive formula computing the primitive form associated to a given good basis.

Finally, in Theorem~\ref{theorem: R--matrix} we give a recursive formula for the R--matrix of the Dubrovin--Frobenius manifold $M^\omega_f$ using the trivialization $\Phi^\omega$ and the computations in $\PV$. This shows that the BV--algebra generating function computed via the BV--operator trivialization $R\cdot \Phi^\omega$ coincides with the Saito theory potential $\F^\omega$.

\subsection{Acknowledgements}
The research leading to these results has received funding from the Basic Research Program at the National Research University Higher School of Economics.

The author is grateful to Sergey Shadrin for fruitful discussions.

\section{Brieskorn lattice of a singularity}\label{section: Saito--Frobenius manifold}
In this section we give a fast track through the construction of a Dubrovin--Frobenius manifold on the singularity unfolding space. This incorporates work of many people, especially K. Saito and M. Saito, to collect together the data fitting B. Dubrovin's theory. We do not aim to give a full account on the Saito--Frobenius theory, mostly highlighting only those definitions and conventions that will be needed later on. For a full presentation see \cite{H}.

\subsection{Jacobian algebra}
In what follows we will work with quasihomogeneous isolated singularities defined by a polynomial $f \in \CC[x_1,\dots,x_N]$.
Namely, we will assume that there is a set of numbers $q_1,\dots,q_N$, $q_k \in \QQ \cap (0,\frac{1}{2}]$, such that $f$ is a linear combination of monomials $x_1^{a_1}\cdots x_N^{a_N}$ subject to $a_1 q_1 + \dots + a_N q_N = 1$, and $\bx = 0$ is the only point in $\CC^N$ such that $\frac{\p f}{\p x_1} = \dots = \frac{\p f}{\p x_N} = 0$.

Associated to $f$ are the algebra $\Jac(f)$ and the $\dim(\Jac(f))$--dimensional vector space $\Omega_f$:
\[
 \Jac(f) := \CC[x_1,\dots,x_N] / (\frac{\p f}{\p x_1},\dots \frac{\p f}{\p x_N}), \quad
 \Omega_f := \Omega^{N}_{\CC^N} / df \wedge \Omega^{N-1}_{\CC^N}.
\]
The property of $\Jac(f)$ to be finite--dimensional is equivalent to the property of $f$ defining an isolated singularity and is equivalent to the fact that the partial derivatives $(\frac{\p f}{\p x_1}, \dots, \frac{\p f}{\p x_N})$ form a regular sequence. This last algebraic property will be of great importance in this text.

It is obvious that there is a vector space isomorphism $\Jac(f) \cong \Omega_f$ by $[\phi(\bx)] \mapsto [\phi(\bx) d^N\bx]$ where $d^N\bx := dx_1\wedge \dots \wedge dx_N$. An important advantage of $\Omega_f$ is that it is endowed with the bilinear form
\[
 \eta([\phi_1 d^N\bx],[\phi_2 d^N\bx]) := \frac{1}{(2 \pi \sqrt{-1})^N} \int_\Gamma \frac{\phi_1\phi_2 d^N\bx}{\frac{\p f}{\p x_1} \cdots \frac{\p f}{\p x_N}}
\]
where the integration is taken over the cycle $\Gamma := \lbrace |\frac{\p f}{\p x_1}| = \eps, \dots, |\frac{\p f}{\p x_N}| = \eps \rbrace $ for some small $\eps$.

The bilinear form is non--degenerate and well known under the name of the Poincar\'e \textit{residue pairing}. The isomorphism above allows one to lift it to $\Jac(f)$. We will denote by $\eta_f: \Jac(f) \otimes \Jac(f) \to \CC$ this lifted residue pairing. It follows immediately from the construction that
\[
 \eta_f([\phi_1],[\phi_2] \circ [\phi_3]) = \eta_f([\phi_1] \circ [\phi_2], [\phi_3])
\]
for any $[\phi_\bullet] \in \Jac(f)$ and $\circ$ being the product of $\Jac(f)$. The equality above is called the \textit{Frobenius algebra} property.

To the end of this text, fix $\mu := \dim \Jac(f)$, known under the name of \textit{Milnor number} of $f$. The algebra $\Jac(f)$ is also called the \textit{Jacobian algebra}.

\subsection{Weight and Hessian}\label{subsection: grading at the origin}
Introduce the weight function $\wt: \CC^N \to \QQ$. Set $\wt(x_k) := q_k$. Then $\wt(f) = 1$ and the weight function descends to both $\Jac(f)$ and $\Omega_f$.

Consider the polynomial $\hess(f) := \det \left( \frac{\p^2 f}{\p x_i \p x_j} \right)_{i,j=1}^N$, called the \textit{Hessian polynomial} of $f$. Then
\[
    \wt(\hess(f)) = \sum_{k=1}^N (1 - 2q_k).
\]
This polynomial is very important for $\Jac(f)$ due to the following two facts: 1. $\wt [\hess(f)] \ge \wt [\phi]$ for any homogeneous $[\phi] \in \Jac(f)$, the equality being reached if and only if $[\phi] = c [\hess(f)]$; 2. $\eta_f([1],[\hess(f)]) = \mu$ and $\eta_f([1], [a]) = 0$ for any $[a]$ belonging to the complement of $\CC \langle [\hess(f)] \rangle$ in $\Jac(f)$.

\subsection{Brieskorn lattice}
Next to $\Omega_f$, another essential object associated to the polynomial $f$ is the $\CC[z]$--module
\[
    \H^{(0)}_f := \Omega^N_{\CC^N}[z] / (zd + df\wedge ) \Omega^{N-1}_{\CC^N}
\]
called the \textit{Brieskorn lattice of $f$}. It is a $\CC[z]$--module of rank $\mu$. A basis can be taken to be $\lbrace [\phi_1(\bx) d^N \bx],\dots, [\phi_\mu(\bx) d^N \bx] \rbrace$ where the polynomials $\phi_\bullet \in \CC[\bx]$ are taken to be such that their classes generate $\Jac(f)$ as a $\CC$--vector space.

Let $\CC((z^{-1}))$ stand for the ring of Laurent series in $z^{-1}$. Define the completion of the Brieskorn lattice by
\[
    \H_f := \H^{(0)}_f \otimes_{\CC[z]} \CC((z^{-1})).
\]

Extend the weight function $\wt$ to $\Omega^N_{\CC^N}[z]$ and $\CC[\bx][z]$ by setting $\wt(z) := 1$. Obviously, it descends to both $\H_f^{(0)}$ and $\H_f$.

\section{BV algebras and homotopic resolution}
Let $(V,d)$  be a graded dg algebra. It is called a Batalin--Vilkovisky or just BV algebra if it is equipped with an additional operator $\Delta$, such that $\Delta^2 = 0$, $\Delta d + d \Delta = 0$, and $\Delta$ is of order two with respect to the product of $V$.

Assuming $z$ as a formal variable, let $\Phi \in \mathrm{End}(V)[[z]] = \Id + \sum_{k \ge 1} \Phi_k z^k$ be a homotopy of the complexes
\[
    (V^\bullet[[z]], d + z \Delta) \to (V^\bullet[[z]], d).
\]
Namely, $\Phi$ should satisfy
\begin{equation}\label{eq: BV operator trivialization}
    \Phi (d + z \Delta) = d \Phi.
\end{equation}
Such power series $\Phi$ were considered in \cite{KMS} and called \textit{$\Delta$--resolutions} of the BV algebra. Given a $\Delta$--resolution, Khoroshkin, Markarian and Shadrin constructed in \cite{KMS} the quasi-isomorphism of the Hycomm and BV operads. We use it here in a slightly modified manner.

\subsection{BV--algebra of the polyvector fields}\label{section: BV algebra}
Denote by $\PV$ the space of smooth polyvector fields on $\CC^N$. Namely, $\PV:= \bigoplus_{i \ge 0} \PV^{i}$ with $\PV^{i} := \wedge^i \T_{\CC^N}$. For $\theta_k := \frac{\p}{\p x_k}$ being the basis vectors of $\T_{\CC^N}$, an element of $\PV^{i}$ is a linear combination with complex coefficients of
\[
    \alpha(x) \theta_{r_1} \wedge \dots \wedge \theta_{r_i}
\]
with $1 \le r_\bullet \le N$ and $\alpha$ being a holomorphic function on $\CC^N$. The index value $i=0$ is allowed, in which case we will write the unit $1$ instead of the $0$--length wedge product.

Assign to $A \in  \PV^{i}$ the grading $i$ and denote $|A|:= i \mod 2$. Then the wedge product gives a $\ZZ/2\ZZ$--graded product structure on $\PV$ with the unit $1 \in \PV^{0}$.

The space $\PV$ has natural operators $ \theta_p \wedge: \PV^{i} \to \PV^{i+1}$  for any $p$. In what follows we are going to use the operator $\dfrac{\p}{\p \theta_p}: \PV^{i} \to \PV^{i-1}$ acting trivially on the $\alpha(x)$ multiple above and satisfying
\[
    \frac{\p}{\p \theta_a} (\theta_b\wedge) + (\theta_b\wedge) \frac{\p}{\p \theta_a} = \delta_{a,b}.
\]
%

Once and for all, fix a holomorphic nowhere vanishing function $\lambda$ and a holomorphic volume form
\[
    \Omega_\lambda := \frac{1}{\lambda} dx_1\wedge \dots \wedge dx_N.
\]
The contraction operator $\vdash \Omega_\lambda$ provides an isomorphism $\PV^{i} \cong \A^{N-i}$ to the space of ${(N-i,0)}$--forms on $\CC^N$. Denote by $\p$ the operator on $\PV$ lifted by this isomorphism from the Hodge theory holomorphic differential. We have
\begin{align*}
    \p \left( \alpha(x) \bigwedge_{j \in J} \theta_{j} \right)  &=
    \sum_{k=1}^N \lambda \frac{\p }{\p x_k} \left( \frac{\alpha(x)}{\lambda} \right)  \frac{\p}{\p \theta_k} \left( \bigwedge_{j \in J} \theta_{j} \right).
\end{align*}
One notes immediately that $\p^2 = 0$.

The operator $\p$ does not satisfy the Leibniz rule. Instead, $\p$ is of second order with respect to the product.
The BV operator $\p$ defines a non--trivial bracket
\[
 \lbrace A, B \rbrace := (-1)^{|A|} \p(A B) - (-1)^{|A|} \p(A) B - A \p(B)
\]
satisfying the Jacobi identity. In particular, for any polynomial $f \in \CC[x_1,\dots,x_N]$ consider the operator $\lbrace f, - \rbrace : \PV \to \PV$ taking the bracket with $f(x) \cdot 1 \in \PV^{0}$. It acts by
\begin{equation}\label{eq: bracket with f}
 \left\lbrace f, \alpha(x) \bigwedge_{j \in J} \theta_{j} \right\rbrace
 = \sum_{k=1}^N \frac{\p f}{\p x_k} \alpha(x) \frac{\p}{\p \theta_k} ( \bigwedge_{j \in J} \theta_{j} ).
\end{equation}

\subsection{BV--algebra of a hypersurface singularity}\label{section: BV of a singulariy}
Let $f \in \CC[x_1,\dots,x_N]$ define a quasihomogeneous isolated singularity at the origin $\bx = 0 \in \CC^N$.

Following \cite{LLS}, associate to it the operator $\bar\p_f : \PV \to \PV$ by
\[
    \bar \p_f := \lbrace f, - \rbrace.
\]

We have $(\bar \p_f)^2 = \lbrace f, \lbrace f, - \rbrace \rbrace = 0$.
Similarly $\bar\p_f \p + \p \bar\p_f = \lbrace f, - \rbrace \p + \p \lbrace f, - \rbrace= 0$ because $\p / \p \theta_\bullet$ anticommute. In particular, the new differential $\bar\p_f$ provides a Batalin--Vilkovisky algebra structure $(\PV, \bar \p_f, \p)$.

It is not hard to see (cf. Lemma 2.2 of \cite{LLS}) that  $H^k(\PV, \bar \p_f) = 0$ unless $k = 0$, and there is an algebra isomorphism
\begin{align}\label{eq: jac as cohomology}
H^\ast(\PV, \bar \p_f) & \cong \Jac(f),
\\
\notag
[\phi \cdot 1 ] & \mapsto [\phi].
\end{align}

Similarly we have the isomorphism
\begin{align}\label{eq: preBL as cohomology}
    H^\ast(\PV [z], \bar \p_f + z\p) & \cong \H_f^{(0)},
    \\
\notag
    [\phi(x,z) \cdot 1] & \mapsto [\phi(x,z) d^N\bx].
\end{align}
It shows that the Brieskorn lattice can be studied with the help of the BV algebra $(\PV[z], \overline{\p}_f, \p)$. However, note that the latter has a big advantage --- its product descends to a non--trivial product in cohomology.

\subsection{Topological trivialization of $(\PV,\overline \p_f, \p)$}\label{section: topological trivialization}
Denote by $\Phi^{top}$ the special trivialization of the BV--operator $\p$, inducing a trivial action on the cohomology w.r.t. $\overline \p_f$. It should satisfy
\[
    \Phi^{top} (\overline \p_f ) = (\overline \p_f + z \p) \Phi^{top}
\]
as in Eq.~\eqref{eq: BV operator trivialization}. The name \textit{topological} will be justified later.

Set
\begin{equation}
    \Phi^{top}\left(g(\bx) \cdot \bigwedge_{j \in J} \theta_{j} \right)
    := \widetilde \Phi^{top}\left(g(\bx) \right) \cdot \bigwedge_{j \in J} \theta_{j},
\end{equation}
for a $\CC$--linear map $\widetilde \Phi^{top}: \CC[\bx] \to \CC[\bx] \otimes \CC[z]$ that we define below.

The trivialization property is then equivalent to the following equality:
\[
    \sum_{k=1}^N \Phi^{top} \frac{\p f}{\p x_k} \frac{\p}{\p \theta_k} =
    \sum_{k=1}^N \left(\frac{\p f}{\p x_k} + z \frac{\p}{\p x_k} \right) \frac{\p}{\p \theta_k} \Phi^{top}.
\]

Consider $\CC[\bx]$ as an infinite--dimensional $\CC$--vector space. Let $\phi_1,\dots,\phi_\mu$ be polynomials whose classes generate $\Jac(f)$.

The polynomials $\prod_{k=1}^N \left(\frac{\p f}{\p x_k} \right)^{p_k} \phi_\alpha$ with all possible $\alpha$ and natural $p_1,\dots,p_N$ generate $\CC[\bx]$ as a vector space. The set of all such polynomials is independent of relations because $(\frac{\p f}{\p x_1},\dots,\frac{\p f}{\p x_N})$ is a regular sequence. Hence we have constructed a special basis of $\CC[\bx]$.

Because $f$ is quasihomogeneous with $\wt f = 1$, the partial derivatives $\frac{\p f}{\p x_k}$ are quasihomogeneous with $\wt (\frac{\p f}{\p x_k}) = 1 - q_k > 0$; we may assume $\phi_\bullet$ to be quasihomogeneous too. The weight of any basis element $\prod_{k=1}^N \left(\frac{\p f}{\p x_k} \right)^{p_k} \phi_\alpha$, computed by the function $\wt$, is positive unless $r_1=\dots = r_N = 0$ and $\phi_\alpha=1$.
Then every quasihomogeneous polynomial $a(x) \in \CC[\bx]$ is written as a finite $\CC$--linear combination of the special basis elements.


Set $\widetilde \Phi^{top} (\phi_\alpha) = \phi_\alpha$ and more generally
\begin{align}
    & \widetilde \Phi^{top} \left( \prod_{k=1}^N \left(\frac{\p f}{\p x_k} \right)^{p_k} \phi_\alpha \right) := \prod_{k=1}^N \left( \frac{\p f}{\p x_k} + z \frac{\p}{\p x_k} \right)^{p_k} \ \phi_\alpha.
\end{align}
The expression on the RHS is well-defined because the operators $\frac{\p f}{\p x_k} + z \frac{\p}{\p x_k}$ commute with each other for different values of $k$, and no order in the product is important.

Note that the map $\widetilde \Phi^{top}$ is quasihomogeneous because the weight of $f$ and $z$ are both equal to $1$. In particular, $\wt (\widetilde \Phi^{top}) = 0$.

Let $\Phi^{top} = \sum_{k \ge 0} \Phi^{top}_k z^k$ with $\Phi^{top}_k \in \mathrm{End}(\PV)$. Then $\Phi^{top}_0 = \mathrm{Id}$ and $\Phi^{top}$ is invertible.
%
%
%
%

\subsection{Computational aspects}\label{section: computational aspects}
The trivialization $\Phi^{top}$ is easy to compute. Indeed it is defined explicitly in the $\CC[\bx]$---basis $\prod_{k=1}^N (\frac{\p f}{\p x_k})^{r_k} \phi_\alpha$. In order to find the trivialization in the monomial basis $\prod_{k=1}^N x_k^{m_k}$, one needs to find the transition matrix between the two bases.
This is done via the inverse map $(\Phi^{top})^{-1}$. To find it one can use the formal power series expansion $(\Phi^{top})^{-1} = \sum_{m \ge 0} (-1)^m \left(\sum_{k \ge 1}\Phi_k^{top} z^k \right)^m $ that is again not hard to compute.

For $f = x_ 1^3 + x_ 2^3 x_ 1$, take $\{ \phi_1,\dots, \phi_7 \} = \left\{1, x_ 1, x_ 1^2, x_ 2, x_ 1 x_ 2, x_ 1^2 x_ 2, x_ 2^2 \right\}$.
\begin{align*}
    & x_ 2^3 = \frac{\p f}{\p x_1}\phi _ 1 - 3\phi _ 3, \quad x_ 1 x_ 2^2 = \frac{\p f}{\p x_2} \frac{\phi _ 1} {3}, \quad x_ 1^3 = \frac{\p f}{\p x_1} \frac {\phi _ 2} {3} - \frac{\p f}{\p x_2} \frac {\phi _ 4} {9},
    \\
    & x_ 2^4 =  \frac{\p f}{\p x_1}\phi _ 4 - 3\phi _ 6, \quad x_ 1 x_ 2^3 = \frac{\p f}{\p x_2} \frac{\phi_4} {3}, \quad x_ 1^2 x_ 2^2 = \frac{\p f}{\p x_2} \frac{\phi _ 2} {3}, \quad x_ 1^3 x_ 2 = \frac{\p f}{\p x_1} \frac{\phi _ 5} {3} - \frac{\p f}{\p x_2} \frac{\phi_7} {9},
    \\
    & \qquad x_ 1^4 = \frac{\p f}{\p x_1} \frac{\phi_3} {3} - \frac{\p f}{\p x_2} \frac{\phi _ 5} {9}.
\end{align*}
Then $\Phi^{top}$ acts by
\begin{align*}
 & x_ 2^3 \mapsto  x_ 2^3, \quad x_ 1 x_ 2^2 \mapsto  x_ 1 x_ 2^2, \quad x_ 1^3 \mapsto  x_ 1^3 + \frac {2 z} {9},
 \\
 & x_ 2^4 \mapsto  x_ 2^4, \quad x_ 1 x_ 2^3 \mapsto  x_ 1 x_ 2^3 + \frac {z} {3}, \quad x_ 1^2 x_ 2^2 \mapsto  x_ 1^2 x_ 2^2,
 \quad x_ 1^3 x_ 2 \mapsto  x_ 1^3x_ 2  + \frac {z x_ 2} {9}, \quad x_ 1^4 \mapsto  x_ 1^4 + \frac {5 z x_ 1} {9}.
\end{align*}

Denote by $[\phi ]_f$ the class of $\phi$ in $\Jac(f)$. Composed with the isomorphism of Eq.~\eqref{eq: preBL as cohomology}, the inverse $(\Phi^{top})^{-1}$ gives the following map $\H_f^{(0)} \to \Jac(f)[z]$:
\begin{align*}
    & 0 = [x_ 2^3 d^2\bx] \mapsto [x_ 2^3 ]_{f} = 0, \quad  0 = [x_1x_ 2^2 d^2\bx] \mapsto [x_1x_ 2^2]_{f} = 0, \quad  [x_ 1^3 d^2\bx] \mapsto  [x_ 1^3]_f - \frac {2 z} {9}[1]_f = -\frac {2 z} {9}[1]_f,
    \\
    & 0 = [x_ 2^4 d^2\bx] \mapsto [x_ 2^4]_f = 0, \quad
    [x_ 1 x_ 2^3 d^2\bx] \mapsto [x_ 1 x_ 2^3]_f - \frac{z}{3}[1]_f = - \frac{z}{3}[1]_f, \quad
    [x_ 1^2 x_ 2^2 d^2\bx] \mapsto  [x_ 1^2 x_ 2^2]_f = 0,
    \\
    & [x_ 1^3 x_ 2 d^2\bx] \mapsto  [x_1^3 x_2 ]_f - \frac {z} {9} [x_ 2]_f = - \frac {z} {9} [x_ 2]_f, \quad
    [x_ 1^4 d^2 \bx] \mapsto  [x_ 1^4]_f - \frac {5 z} {9}[x_ 1]_f =  -\frac {5 z} {9}[x_ 1]_f.
\end{align*}


\section{Dubrovin--Frobenius manifolds}

Investigation of the Dubrovin--Frobenius manifolds was sparked by the work of B. Dubrovin. In this text we only consider them in the application to Saito theory.

Let $M=(M,\O_{M})$ be a connected complex manifold of dimension $\mu$. Denote by $\T_{M}$ its holomorphic tangent sheaf.
A {\it Dubrovin--Frobenius manifold structure on M} (cf. \cite{D1,D2}) is a tuple $(\eta, \circ , e,E)$, where $\eta$ is a non--degenerate $\O_{M}$--symmetric bilinear form on $\T_{M}$, $\circ $ is an $\O_{M}$--bilinear product on $\T_{M}$, defining an associative and commutative $\O_{M}$--algebra structure with a unit $e$, and $E$ is a holomorphic vector field on $M$, called the Euler vector field, which are subject to the following axioms:
\begin{enumerate}
\item $ \eta(X\circ Y,Z)=\eta(X,Y\circ Z)$ for all $X,Y,Z \in\T_M$.
\item The {\rm Levi}--{\rm Civita} connection $\ns$ of $\eta$ is flat.
\item The tensor $C:\T_M\otimes_{\O_M}\T_M\to \T_M$  defined by
$C_X Y := X\circ Y$, for all $X,Y\in\T_M$ is flat with respect to $\ns$.
\item The unit element $e$ of the $\circ $-algebra is a
$\ns$-flat holomorphic vector field.

\item There is $d\in\CC$, such that the metric $\eta$ and the product $\circ$ are homogeneous of degree $2-d$ and $1$ respectively with respect to the Lie derivative
$Lie_{E}$ of the {\rm Euler} vector field $E$: that is,
\begin{equation*}
Lie_E(\eta)=(2-d)\eta,\quad Lie_E(\circ)=\circ.
\end{equation*}
\end{enumerate}

The structure of a Dubrovin--Frobenius manifold can be locally described by an analytic function $\F$, called its \textit{potential}. Namely, let $\mu = \dim M$ and $t_1,\dots,t_\mu$ be flat coordinates of the Levi-Civita connection above. At a point $p \in M$, consider $T_pM = \CC \langle \frac{\p}{\p t_1} ,\dots,\frac{\p}{\p t_\mu}\rangle$. Assume in addition that $t_1$ is such that $e = \frac{\p}{\p t_1}$, $\eta_{ij} = \eta(\frac{\p}{\p t_i},\frac{\p}{\p t_j})$ -- the components of $\eta$ in the basis fixed, and $\eta^{ij}$ being components of $\eta^{-1}$. Then there is a function $\F=\F(t_1,\dots,t_\mu)$ such that
\begin{align}
    & \frac{\p}{\p t_i} \circ \frac{\p}{\p t_j} = \sum_{k,l=1}^\mu \frac{\p^3 \F}{\p t_i \p t_j \p t_l} \eta^{lk} \frac{\p}{\p t_k},
    \label{eq: FDMF-1}
    \\
    & \eta_{ij} = \frac{\p^3 \F}{\p t_1 \p t_i \p t_j}, \qquad E \cdot \F = (3 - d) \F + \text{terms quadratic in $t_\bullet$}. \label{eq: FDMF-2}
\end{align}
Locally the potential $\F$ fully encodes the data of a Frobenius manifold $M$.

\subsection{Formal Dubrovin--Frobenius manifold}\label{subsection: formal Dubrovin--Frobenius manifold}
Consider a formal power series $\F \in \CC[[t_1,\dots,t_\mu]]$ that satisfies Eq.~\eqref{eq: FDMF-2} for some $E$, $\eta$ and $d$. Introduce the product $\circ$ on the formal vectors $\frac{\p}{\p t_1},\dots, \frac{\p}{\p t_\mu}$ by the formula \eqref{eq: FDMF-1}. Let this product be commutative and associative.

Then $\F$ is said to define a \textit{formal} Dubrovin--Frobenius manifold. It would become a true Dubrovin--Frobenius manifold if one associates a complex manifold $M$ to $\F$, such that $\F \in \O_M$.

Note that the associativity condition on the product becomes a system of PDEs on the function $\F$ called the \textit{WDVV equation}.

In particular, the genus zero generating functions of Gromov--Witten theory or any cohomological field theory define some formal Dubrovin--Frobenius manifolds. This follows from the topology of the moduli space of genus zero curves, implying that the corresponding generating functions satisfy the WDVV equation.

Examples of formal Dubrovin--Frobenius manifolds are given by topological cohomological field theories. Let $(A,\circ,\eta,e)$ be a $\mu$--dimensional Frobenius algebra; then its topological cohomological field theory has the genus zero potential
\[
    \F^{top}(t_1,\dots,t_\mu)
    := \frac{1}{3!} \sum_{\alpha_1,\alpha_2,\alpha_3 = 1}^\mu \eta (\phi_{\alpha_1} \circ \phi_{\alpha_2},\phi_{\alpha_3}) t_{\alpha_1}t_{\alpha_2}t_{\alpha_3}.
\]

Even though formal Dubrovin--Frobenius manifolds lack such an important property as analyticity, they enjoy the following additional advantage. A. Givental introduced in \cite{G01a,G04} a group action on the space of formal Dubrovin--Frobenius manifolds. This group action will play an important role later on.

\subsection{Dubrovin connection}\label{subsection: Dubrovin connection}
Associate to any Dubrovin--Frobenius manifold the connection on $M \times \CC^\ast_z$ given by
\begin{align}\label{eq: Dubrovin connection}
    \widetilde \nabla_X := \ns_X + \frac{1}{z} C_X, \quad \widetilde \nabla_{\frac{d}{dz}} := \frac{d}{dz} + \frac{1}{z} \left( B_0 + \frac{B_\infty}{z} \right)
\end{align}
for $C_X(Y) := X \circ Y$, $B_0(Y) := E \circ Y$, and the diagonal grading operator $B_\infty(Y) := \dfrac{2-d}{2}Y - \nabla_Y E$.

This new connection is flat. It provides an important piece of data of a Dubrovin--Frobenius manifold. In particular, its flat sections reconstruct the product uniquely.

Given a manifold $M$ with a flat pairing $\eta$ and a fiberwise product $\circ$ on $\T_M$ satisfying the Frobenius algebra property, if $\widetilde \nabla$ is flat, then $M$ is a Dubrovin--Frobenius manifold (cf. Proposition 2.1 in \cite{D2} and \cite{Sa}). This gives one of the ways to construct Dubrovin--Frobenius manifolds.

\subsection{Givental's group action}\label{section: givental's action}
Let $\eta$ be a $\mu \times \mu$ matrix of a constant non--degenerate bilinear form. The space of all matrix--valued formal power series $R = \Id + \sum_{k \ge 1} R_k z^k$ with $R_k \in \mathrm{Mat}(\CC,\mu)$, satisfying the condition $R(z)\eta^{-1}R^T(-z) = \eta^{-1}$, is called {\it Givental's group}.

By A. Givental, this group acts on the space of formal Dubrovin--Frobenius manifolds of dimension $\mu$ with metric $\eta$ (cf. \cite{G01a}). This group action is now known under the name of Givental's group action\footnote{also called the upper-triangular group action.}.

Givental's action has appeared to be very important in different applications. It can be introduced via the graph summation formula \cite{DbSS} or by the action of a differential operator \cite{G01a, L08}. However, the particular formulae will not play any role in our text and are therefore skipped.

\subsection{From BV/$\Delta$ to a Dubrovin--Frobenius manifold}
In \cite{KMS} Khoroshkin, Markarian and Shadrin constructed the quasi-isomorphism of operads $\theta: \mathrm{Hycomm} \to \mathrm{BV}/\Delta$. To every BV--algebra $(V,d,\Delta)$ with trivialized BV--operator they associate the map
\[
    \theta_n: H^\ast(V,d) ^{\otimes n} \to H^\ast(V,d).
\]
In case $H^\ast(V,d)$ is a Frobenius algebra with pairing $\eta$ and unit $1$, the correlators
\[
    \langle a_1,\dots,a_n \rangle^\Phi := \eta\left(1, \theta_n(a_1,\dots,a_n) \right)
\]
are the correlators of some genus zero cohomological field theory. This construction strongly depends on the choice of the BV--operator trivialization, and this is the reason why we put superscript $\Phi$ on the correlators.

In particular, if $H^\ast(V,d) \cong \CC \langle \phi_1,\dots, \phi_\mu \rangle$, then the generating function
\[
    \F^\Phi(t_1,\dots,t_\mu) := \sum_{n \ge 3} \frac{1}{n!} \sum_{\alpha_1,\dots,\alpha_n} \langle \phi_{\alpha_1},\dots,\phi_{\alpha_n} \rangle^\Phi t_{\alpha_1}\cdots t_{\alpha_n} \in \CC[[t_1,\dots,t_\mu]]
\]
is a solution to the WDVV equation and defines a formal Dubrovin--Frobenius manifold.

\subsubsection{Topological trivialization $\Phi^{top}$}
For the BV--algebra $(\PV,\overline{\p}_f,\p)$, the trivialization $\Phi^{top}$ was called topological due to the following reason:
\[
    \Phi_k^{top} ([a]) \equiv 0 \in H^\ast(\PV,\overline\p_f + z \p), \quad \forall k \ge 1 \ \text{and} \ [a] \in H^\ast(\PV,\overline{\p}_f).
\]
Then the construction of Khoroshkin, Markarian and Shadrin gives
\[
    \F^{\Phi^{top}}(t_1,\dots,t_\mu) = \frac{1}{3!} \sum_{\alpha_1,\alpha_2,\alpha_3} \langle \phi_{\alpha_1},\phi_{\alpha_2},\phi_{\alpha_3} \rangle^{\Phi^{top}} t_{\alpha_1}t_{\alpha_2}t_{\alpha_3}
    = \frac{1}{3!} \sum_{\alpha_1,\alpha_2,\alpha_3} \eta (\phi_{\alpha_1} \circ \phi_{\alpha_2},\phi_{\alpha_3}) t_{\alpha_1}t_{\alpha_2}t_{\alpha_3}.
\]
This is exactly the genus zero potential of the topological CohFT associated with $\Jac(f)$.

We are going to use special trivializations of $\p$ in order to get the Saito theory Dubrovin--Frobenius manifold later on.

\subsection{Different trivializations}\label{section: different trivializations}
Another major result of \cite{KMS} is the following. Fix the BV--algebra $(A,d,\Delta)$, and let $\Phi$ be a trivialization of the BV--operator. Assume $R = \Id + \sum_{p \ge 1} z^p R_p \in \End(V)[[z]]$ commutes with $d$. Then $R\Phi$ is another trivialization of the BV--operator of the same BV--algebra.

Let $H^\ast(V,d)$ be endowed with the pairing and unit. Then we have two sets of correlators
\[
 \langle a_1,\dots,a_n \rangle^{\Phi} \quad \text{and} \quad \langle a_1,\dots,a_n \rangle^{R\Phi}
\]
giving, a priori, two different generating functions $\F^{\Phi}$ and $\F^{R\Phi}$. Khoroshkin, Markarian and Shadrin prove:

\begin{theorem}[Theorem 6.3 in \cite{KMS}]\label{theorem: KMS different trivializations}
    If $R$, assumed as an operator on $H^\ast(V,d)$, satisfies the condition $R(z)\eta^{-1}R^T(-z) = \eta^{-1}$, then the generating functions $\F^{\Phi}$ and $\F^{R\Phi}$ are connected by the Givental action of $R$ (see Section~\ref{section: givental's action}).
\end{theorem}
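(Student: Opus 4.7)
The plan is to reduce both sides of the claimed identity to explicit sums over stable trees and match the contributions tree-by-tree. By the KMS construction, the correlator $\langle a_1,\dots,a_n\rangle^{\Phi}$ is produced by evaluating a canonical element of $\mathrm{Hycomm}(n)$ through the quasi-isomorphism $\theta:\mathrm{Hycomm}\to\mathrm{BV}/\Delta$; this is expressible as a sum over rooted trees whose vertices carry iterated products and applications of $\Delta$, whose tails carry insertions of $\Phi$, and whose internal edges carry a propagator built from $\Phi$ and $\eta^{-1}$. On the Givental side, the action of $R$ on a formal Dubrovin--Frobenius potential $\F^{\Phi}$ is also given by a graph-sum formula (cf.\ \cite{DbSS}): a sum over stable trees where tails are decorated by $R^{-1}(-\psi)$, internal edges carry the bivector
\begin{equation*}
V(\psi',\psi'')=\frac{\eta^{-1}-R^{-1}(\psi')\,\eta^{-1}\,(R^{-1})^{T}(\psi'')}{\psi'+\psi''},
\end{equation*}
and each vertex is weighted by a correlator of the untransformed potential $\F^{\Phi}$. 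The symplectic condition $R(z)\eta^{-1}R^{T}(-z)=\eta^{-1}$ is precisely what guarantees that $V$ is regular along $\psi'+\psi''=0$, so the edge decoration is a well-defined power series.

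First I would make the substitution $\Phi\mapsto R\Phi$ in the KMS graph sum and, using the hypothesis that $R$ commutes with $d$, slide the $R$-factors to the tails and edges of each graph. On each tail this produces exactly the series $R^{-1}(-\psi)$ after combining with $(\Phi)^{-1}$, by the same expansion $R^{-1}=\sum_{m\ge0}(-1)^{m}\bigl(\sum_{k\ge1}R_k z^k\bigr)^{m}$ used in Section~\ref{section: computational aspects}. On each internal edge the substitution produces an expression of the form $R(z_1)\eta^{-1}R^{T}(z_2)$ attached to a $\Phi$-propagator, and one must show that the difference between this and the original edge contribution assembles into the Givental bivector $V$.

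The key algebraic step is to rewrite
\begin{equation*}
\eta^{-1}-R(z_1)\,\eta^{-1}\,R^{T}(z_2)=R(z_1)\,\eta^{-1}\bigl(R^{T}(-z_1)-R^{T}(z_2)\bigr),
\end{equation*}
where the symplectic condition has been used in the first factor; the right-hand side is divisible by $z_1+z_2$, and after inverting $R$ on the left one recovers the numerator of $V$. Expanding $(z_1+z_2)^{-1}$ as a geometric series then matches the higher-order corrections from iterated $R$-insertions along chains of edges in the KMS graph sum. Thus the symplectic hypothesis is not used only to make $V$ regular: it is exactly what identifies the edge contribution of the modified KMS expansion with Givental's edge decoration.

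The hard part will be the combinatorial bookkeeping: one needs a bijection between the trees produced by iterated substitution of $R\Phi$ into the KMS formula and the trees appearing in Givental's formula, preserving symmetry factors, and the resulting identity of rational functions must be verified coefficient by coefficient in the $R_k$. Once the tail and edge contributions are identified as above, vertex correlators on both sides are the unchanged $\F^{\Phi}$-correlators, so the theorem follows. A clean way to organize this is to work universally in the Givental Fock space, where both constructions define explicit operators, reducing the claim to an equality of quantized operators rather than a graph-by-graph verification; this is essentially the route taken in \cite{KMS}.
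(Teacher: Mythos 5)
This statement is not proved in the paper at all: it is quoted verbatim as Theorem~6.3 of \cite{KMS} and used as a black box, so there is no in-paper argument to compare your proposal against. Judged on its own, your outline has the right overall shape --- reduce both sides to graph sums, use the symplectic condition to make the Givental edge bivector $V$ well defined and to identify it with the change in the edge contribution --- and your algebraic identity $\eta^{-1}-R(z_1)\eta^{-1}R^{T}(z_2)=R(z_1)\eta^{-1}\bigl(R^{T}(-z_1)-R^{T}(z_2)\bigr)$ is a correct and genuinely relevant use of the hypothesis.

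However, as it stands this is a plan rather than a proof, and the gap you yourself flag is exactly where the content of the theorem lives. Two points in particular are asserted without justification. First, you take for granted that the KMS expression for $\theta_n$ depends on the trivialization $\Phi$ only through decorations on tails and internal edges of a tree sum, so that substituting $R\Phi$ can be ``slid'' onto tails and edges while leaving the vertex correlators equal to those of $\F^{\Phi}$; this structural fact about the $\mathrm{Hycomm}\to\mathrm{BV}/\Delta$ morphism is precisely what must be established (using that $R$ commutes with $d$ and hence descends to cohomology), and it is not automatic from the operadic definition. Second, the bijection between the trees generated by iterating $R\Phi$ and Givental's stable trees, together with the matching of symmetry factors and $\psi$-class expansions, is the entire combinatorial body of the argument; deferring it means the theorem has not been proved. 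If you intend to rely on \cite{KMS} (or on the Fock-space reformulation you mention at the end), you should say so and cite it rather than present the reduction as a proof; if you intend to prove it independently, the tail/edge factorization of the KMS formula and the graph bijection must be written out.
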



\section{Brieskorn lattice and BV algebra of the unfolding}\label{section: unfolding}
Given a quasihomogeneous singularity, Saito's theory allows one to construct a Dubrovin--Frobenius manifold structure on the unfolding space of the singularity. This construction is not unique, depending on the additional choice --- primitive form or opposite subspace. Main references are \cite{ST, H} and the citations therein.

\subsection{Unfolding of a singularity}

Consider an \textit{unfolding} of $f$. Namely, the function $F : Z \to \CC$, where $Z = \CC^N \times \calS$, for some open neighbourhood of the origin $\calS \subset \CC^\mu$ with coordinates $s_\bullet$, defined by
\[
    F(\bx,\bs) = f(x) + \sum_{k = 1}^\mu \phi_k(x) s_k.
\]
Here the polynomials $\phi_\bullet \in \CC[\bx]$ are taken to be such that their classes generate $\Jac(f)$. We also assume these polynomials to be homogeneous with respect to the function $\wt$ introduced in previous sections.

Let $p: Z \to \calS$ be the projection on the second factor. Consider the critical sheaf
\[
    \O_\C := \O_{Z} / \left( \p_{x_1}F, \dots, \p_{x_N} F \right).
\]
Then $p_\ast \O_\C$ is an $\O_\S$--module of rank $\mu$ with the essential product structure.

Let $(\Omega^\bullet_{Z/\calS},d_{Z/\calS})$ stand for the de Rham complex relative to $p$. Consider the direct image sheaf
\[
    \RR^k p_\ast \left( \Omega^\bullet_{Z/\calS} [z],z d_{Z/\calS} + dF \wedge \right).
\]
It vanishes for $k \neq N$. And when $k=N$, it is isomorphic to the sheaf
\[
    \H_F^{(0)} := \Omega^N_{Z/\calS} [z] / (z d_{Z/\calS} + dF \wedge ) \Omega^{N-1}_{Z/\calS} [z].
\]
The fiber of it over a point $\bs \in \calS$ is given by $\H^{(0)}_g$ with $g: \CC^N \to \CC$ given by $g(x) = F(x,s)$. This sheaf is a locally free $\O_\calS[z]$--module of rank $\mu$ (cf. Proposition 3.5 of \cite{IMRS}).

\subsubsection{Gauss--Manin connection}\label{section: Gauss-Manin connection}
$\H_F$ is endowed with the Gauss--Manin connection $\nabla^\GM$ defined as follows. Let $v \in \T_\calS$, $\phi \in \O_\calS[z]$, and $d^N\bx = dx_1\wedge \dots \wedge dx_N$. Denote by $v(\phi)$ the directional derivative along $v$ and by $[\phi d^N\bx]$ the class of $\phi d^N\bx$ in $\H_F$.
\begin{align}
    & \nabla_v^\GM [\phi d^N\bx] := \left[ (v (\phi) + z^{-1} \phi \cdot v(F)) d^N\bx \right],
    \\
    & \nabla_{\frac{\p}{\p z}}^\GM [\phi d^N\bx] := \left[ \left(\frac{\p \phi}{\p z} - z^{-2} \phi \cdot F - \frac{N}{2} z^{-1} \phi \right) d^N\bx \right].
\end{align}
The Gauss-Manin connection satisfies the Leibniz rule and is flat.

This connection is not of the form of Dubrovin connection~\eqref{eq: Dubrovin connection}. Some additional work has to be done in order to put it in the right form. There are two ways to do that. The first one uses the period mapping defined by the primitive form, and the second one is essentially in the form of S. Barannikov's semi-infinite variations of Hodge structures.

Both approaches make use of the higher residue pairing and good bases.


\subsubsection{Higher residue pairing}\label{section: higher residue pairing}
K. Saito introduced the pairing (cf. \cite{SK2})
\[
   K_F: \H_F^{(0)}\otimes_{\O_\S} \H_F^{(0)}\to \O_\S[z]
\]
called the \textit{higher residue pairing}, uniquely defined by the following properties.
Let
\[
 K_F(\omega_1, \omega_2) = \sum_{p \ge 0} z^p K_F^{(p)}(\omega_1,\omega_2)
\]
then we have
\begin{enumerate}

\item $K_F^{(p)}(\omega_1, \omega_2) = (-1)^p K_F^{(p)}(\omega_2,\omega_1)$.

\item $K_F(z\omega_1, \omega_2) = -K_F(\omega_1, z \omega_2) = zK_F(\omega_1,\omega_2)$.

\item $K_F^{(0)}$ defines the pairing
$$
    \H_F^{(0)}/z  \H_F^{(0)}\otimes  \H_F^{(0)}/z  \H_F^{(0)} \to \CC, \quad \omega_1\otimes \omega_2\mapsto K_F^{(0)}(\omega_1,\omega_2)
$$
which coincides with the residue pairing $\eta$.

\item $K_F$ is flat with respect to the Gauss--Manin connection:
\[
    \xi \cdot K_F (\omega_1,\omega_2) = K_F ( \nabla_\xi^\GM \omega_1,\omega_2) - K_F(\omega_1,\nabla_\xi^\GM \omega_2)
\]
for $\xi = z^2 \frac{\p}{\p z}$ or $\xi \in z \T_\S$.
\end{enumerate}
This is a theorem of K.~Saito that these properties fix the higher residue pairing completely.

The higher residue pairing extends to a pairing $\omega_F : \H_F \otimes \H_F \to \O_\S$ by
\[
   \omega_F(\omega_1, \omega_2):= \res_{z=0} K_F(\omega_1, \omega_2) dz.
\]

The Brieskorn lattice $\H_f^{(0)}$ is endowed with the pairing $K_f: \H_f^{(0)} \otimes \H_f^{(0)} \to \CC[z]$ given by $K_f := K_F \mid_{\bs = 0}$. Similarly, $\H_f$ has the pairing $\omega_f := \omega_F \mid_{\bs =0}$.

\subsubsection{Weight function}
Extend the weight function $\wt$ to $\H_F$. To do this we need to extend $\wt$ to $\O_\S$. This is done by requiring $\wt(F) = 1$. Then $\wt (s_\alpha \phi_\alpha) = \wt (s_\alpha) + \wt(\phi_\alpha) = 1$ for any $\alpha=1,\dots,\mu$, giving $\wt (s_\alpha) = 1 - \wt(\phi_\alpha)$.

For quasihomogeneous singularities, the flatness of $K_F$ with respect to the Gauss--Manin connection in the $\p / \p z$ direction is equivalent to the condition
\[
   K_F^{(p)}([\alpha d^N\bx], [\beta d^N\bx] ) = 0 \ \text{unless} \ \deg \alpha + \deg \beta = p + \deg \hess(f).
\]
for any homogeneous elements  $[\alpha d^N\bx], [\beta d^N\bx] \in \H_F^{(0)}$. Note that this condition holds for $K_F^{(0)}$ and the residue pairing.

\subsection{Good bases}
A subspace $\L \subset \H_f$ is called a \textit{good opposite subspace} if the following four conditions hold:
\begin{enumerate}
    \item $\H_f= \H_f^{(0)} \oplus \L$,
    \item the direct sum decomposition above is quasihomogeneous with respect to $\wt$,
    \item $\L$ is isotropic with respect to $\omega_f$,
    \item $z^{-1}(\L) \subseteq \L$.
\end{enumerate}

Given a good opposite subspace, denote $B := \H_f^{(0)} \cap z \L$. Then we have $B \cong \Jac(f)$, and the following are equivalent (cf. \cite[Lemma$\backslash$Definition 2.16]{LLS})
\[
    \omega_f(\L,\L) = 0 \ \Leftrightarrow \ K_f(\L,\L) \subset z^{-2}\CC[z^{-1}] \ \Leftrightarrow \ K_f(B,B) \subset \CC.
\]
In particular, $\L$ constitutes the decomposition
\begin{equation}\label{eq: Hf0 via the good basis}
    \H^{(0)}_f = B[z], \quad \L = z^{-1} B[z^{-1}].
\end{equation}
In what follows we call a basis $\lbrace \omega_\alpha(x,z)\rbrace_{\alpha=1}^\mu$ of $B$ fixed by the choice of $\L$ a \textit{good basis}.

Given a fixed good basis, choose the polynomials $\phi_\bullet$ whose classes generate $\Jac(f)$ by the equality
\begin{equation}\label{eq: omega_bullet to phi_bullet}
    \omega_a \equiv [ \phi_a d^N \bx] \quad \text{ mod } z \H_f^{(0)}.
\end{equation}
These polynomials are homogeneous due to condition (2) of the good opposite subspace. We may also assume $\phi_1 = 1$.

\subsubsection{Good basis of $\H_F^{(0)}$}

Flatness of the Gauss-Manin connection allows one to extend uniquely an opposite subspace $\L$ and a good basis $\lbrace \omega_\alpha \rbrace_{\alpha=1}^\mu$ over a contractible subspace of $\S$.
We will denote by the same letter $\omega_\bullet \in \G$ these extended good basis elements. Li--Li--Saito constructed a particular map $\E$ that realizes this extension (see Section 4.1.3 of \cite{LLS}). In particular, it follows that every good basis of $\G$ is obtained as the image of a good basis of $\H_f^{(0)}$ under $\E$.

\subsection{From Gauss-Manin to Dubrovin's connection}\label{subsection: DC from GM}
Fix a good basis $\omega_1,\dots,\omega_\mu$ of $\H_F^{(0)}$ and write the Gauss--Manin connection in it. It follows from the properties of the higher residue pairing that in this basis $\nabla^\GM$ attains the form \eqref{eq: Dubrovin connection} and we get the Dubrovin--Frobenius manifold as in Section~\ref{subsection: Dubrovin connection}.

In what follows we denote by $M^\omega_f$ this Dubrovin--Frobenius manifold and by $\nabla^{\omega}$ the Gauss--Manin connection $\nabla^\GM$ computed in the good basis.

The operator $E \circ$ of the Dubrovin--Frobenius manifold coincides with the operator of multiplication $F \cdot $ in $\H_F^{(0)} / z \H_F^{(0)}$.

The metric $\eta$ of the Dubrovin--Frobenius manifold is given by the higher residue pairing. The latter depends additionally on the variable $z$, however this dependence does not show up when evaluating $K_F$ on the good basis.

The coordinates $s_1,\dots,s_\mu$ introduced in the previous sections are generally not flat for the pairing $\eta$ and new coordinates $t_1,\dots,t_\mu$ need to be introduced. The Euler vector field in the flat coordinates assumes the form $E = \sum_{\alpha=1}^\mu \wt(s_\alpha) t_\alpha \frac{\p}{\p t_\alpha}$.

\section{Dubrovin connection via the BV algebra}
The construction of Dubrovin--Frobenius manifold $M_f^\omega$ required the choice of the basis in $\H_f^{(0)}$.
In this section we give basis--free definition of Dubrovin connection in Saito theory.

\subsection{BV algebra $(\PV,\overline \p_F, \p)$ and its trivialization}

Consider the space $\PV \otimes \O_\S$ and the operator $\bar\p_F$  on it given by
\[
    \bar \p_F := \lbrace F, - \rbrace.
\]
We have again $(\bar \p_F)^2 = 0$ and $\bar\p_F \p + \p \bar\p_F = 0$, what gives us
the BV algebra $(\PV \otimes \O_\S, \bar \p_F, \p)$.

Similarly to Section~\ref{section: BV of a singulariy} we have the isomorphisms
\begin{align}
    \Upsilon_0: H^\ast(\PV\otimes\O_\S, \bar \p_F) & \cong p_\ast \O_\C,
    \label{eq: HPV to OC iso}
    \\
    \notag
    [\phi(\bx,\bs) \cdot 1 ] & \mapsto [\phi(\bx,\bs)].
    \\
    \Upsilon_1: H^\ast(\PV \otimes \O_\S[z], \bar \p_F + z\p) & \cong \H_F^{(0)},
    \label{eq: HPV to HF iso}
    \\
\notag
    [\phi(\bx,\bs,z) \cdot 1] & \mapsto [\phi(\bx,\bs,z) d^N\bx].
\end{align}
See also Section~3 and Proposition~3.5 of \cite{LLS}.

We construct the trivialization of $(\PV \otimes \O_\S, \bar \p_F, \p)$.

By the definition of $\S$ for any fixed $\bs \in \S$ the map $g: \CC^N \to \CC$ given by $g(\bx) = F(\bx,\bs)$ defines an isolated singularity. Our construction of the topological trivialization does not work for $g$ because it is not necesserily quasihomogeneous.

However the set $(\frac{\p F}{\p x_1},\dots, \frac{\p F}{\p x_N})$ is a regular sequence for any $\bs \in \S$ and we can construct the topological trivialization of the new BV algebra following the steps of Section~\ref{section: topological trivialization}.

Indeed for $\phi_\alpha := \frac{\p F}{\p s_\alpha}$ the set $\prod_{k=1}^N \left( \frac{\p F}{\p x_k} \right)^{p_k} \phi_\alpha$ is the basis of $\CC[\bx] \otimes_\CC \O_\S$ assumed as an $\O_\S$--module. Namely, we can express every element of $\CC[\bx]$ uniquelly as the linear combination of the basis elements with the $\O_\S$--valued coefficients. This sum is finite because it holds for all $\bs$ and $\bs = 0$ in particular where we know it to be finite by Section~\ref{section: topological trivialization}.

The map $\Phi^{top}_F: \PV \otimes \O_\S[z] \to \PV \otimes \O_\S[z]$ given by
\begin{align}
    & \Phi^{top}_F\left(g(\bx,\bs) \cdot \bigwedge_{j \in J} \theta_{j} \right)
    := \widetilde \Phi^{top}_F\left(g(\bx,\bs) \right) \cdot \bigwedge_{j \in J} \theta_{j},
    \\
    & \widetilde \Phi^{top}_F \left( \prod_{k=1}^N \left(\frac{\p F}{\p x_k} \right)^{p_k} \phi_\alpha \right) := \prod_{k=1}^N \left( \frac{\p F}{\p x_k} + z \frac{\p}{\p x_k} \right)^{p_k} \ \phi_\alpha
\end{align}
trivializes $\p$ in $(\PV \otimes \O_\S, \bar \p_F, \p)$.

Decompose $\Phi^{top}_F = \sum_{k \ge 0} \Phi^{top}_{F,k} z^k$ for some $\Phi^{top}_{F,k}: \PV\otimes\O_\S \to \PV\otimes\O_\S$. It follows immediately from the construction that $\Phi^{top}_{F,0} = \mathrm{Id}$. This shows that $\Phi^{top}_F$ is invertible.

\begin{proposition}\label{prop: tautology}~
    \begin{enumerate}
     \item[(i)] The map $\Upsilon_0\left( \Phi^{top}_F \right)^{-1} \Upsilon_1^{-1}$ establishes the isomorphism $\H_F^{(0)} \to p_\ast \O_\C[z]$ mapping $[\phi_\alpha d^N\bx]$ to $[\phi_\alpha]$.
     \item[(ii)] For any $\omega \in \PV\otimes\O_\S[z]$ let $[\left(\Phi^{top}_F(\omega)\right)^{-1}] = \sum_{k=1}^\mu a_k(\bs) [\phi_k]$ in $p_\ast \O_\C[z]$. Then $[\omega] = \sum_{k=1}^\mu a_k(\bs,z) [\phi_k d^N \bx]$ in $\H_F^{(0)}$.
    \end{enumerate}
\end{proposition}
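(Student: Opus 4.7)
The plan is to observe that this proposition is essentially tautological: by construction $\widetilde\Phi^{top}_F$ sends each of the chosen generators $\phi_\alpha$ (the case $p_1=\cdots=p_N=0$ in the defining formula) to itself, and the rest is $\O_\S[z]$-linearity. First I would check that the composition $\Upsilon_0\circ(\Phi^{top}_F)^{-1}\circ\Upsilon_1^{-1}$ is a well-defined $\O_\S[z]$-linear isomorphism. The factors $\Upsilon_0,\Upsilon_1$ of \eqref{eq: HPV to OC iso}--\eqref{eq: HPV to HF iso} extend $\CC[z]$-linearly to isomorphisms on the relevant $z$-graded cohomologies; $\Phi^{top}_F$ is a chain map from $(\PV\otimes\O_\S[z],\bar\p_F+z\p)$ to $(\PV\otimes\O_\S[z],\bar\p_F)$ by \eqref{eq: BV operator trivialization}; and it is invertible on the nose as an operator on $\PV\otimes\O_\S[z]$, since $\Phi^{top}_{F,0}=\Id$, so $(\Phi^{top}_F)^{-1}$ induces the inverse isomorphism on cohomology. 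The $\O_\S[z]$-linearity of $\Phi^{top}_F$ follows from it being $\O_\S$-linear on the $\O_\S$-basis $\prod_{k}(\p F/\p x_k)^{p_k}\phi_\alpha$ and extended $\CC[z]$-linearly.

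For part (i), I would then simply track the basis element $[\phi_\alpha d^N\bx]\in\H_F^{(0)}$ through the composition. Under $\Upsilon_1^{-1}$ it becomes $[\phi_\alpha\cdot 1]$. The key observation is that the defining formula for $\widetilde\Phi^{top}_F$ specializes at $p_1=\cdots=p_N=0$ to the identity, so $\Phi^{top}_F(\phi_\alpha\cdot 1)=\phi_\alpha\cdot 1$, and therefore $(\Phi^{top}_F)^{-1}(\phi_\alpha\cdot 1)=\phi_\alpha\cdot 1$ already at the chain level in $\PV\otimes\O_\S[z]$. Applying $\Upsilon_0$ then yields $[\phi_\alpha]\in p_\ast\O_\C[z]$. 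Since both $\H_F^{(0)}$ and $p_\ast\O_\C[z]$ are free $\O_\S[z]$-modules of rank $\mu$ with bases $\{[\phi_\alpha d^N\bx]\}$ and $\{[\phi_\alpha]\}$ respectively, and the $\O_\S[z]$-linear map we have identified sends basis to basis, it is an isomorphism.

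Part (ii) is then an immediate corollary of (i) by $\O_\S[z]$-linearity: expanding the class of $(\Phi^{top}_F)^{-1}(\omega)$ in $p_\ast\O_\C[z]$ as $\sum_k a_k(\bs,z)[\phi_k]$ and pulling back along the inverse isomorphism of (i) gives $[\omega]=\sum_k a_k(\bs,z)[\phi_k d^N\bx]$ in $\H_F^{(0)}$. I do not anticipate any real obstacle --- the statement merely records the compatibility of the chosen generators $\phi_\alpha$ with the explicit definition of $\Phi^{top}_F$, and the verifications needed are immediate from the formulas in Section~\ref{section: topological trivialization}.
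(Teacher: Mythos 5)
Your proposal is correct and follows essentially the same route as the paper: the whole content is that $\Phi^{top}_F$ fixes the generators $\phi_\alpha$ at the chain level (the $p_1=\dots=p_N=0$ case of its defining formula) and is an invertible chain map, hence a quasi-isomorphism, after which both (i) and (ii) follow by $\O_\S[z]$-linearity. You make the basis-fixing observation explicit in part (i), where the paper only invokes it in part (ii), but the argument is the same.
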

\begin{proof}
    Part (i) follows immediately becasue $\Phi^{top}_F$ is quasiisomorphism.

    To prove (ii) note that because of Eq.~\eqref{eq: HPV to HF iso} and Eq.~\eqref{eq: HPV to OC iso} this is equivalent to the same statement about $H^*(\PV\otimes\O_\S[z],\overline\p_F + z \p)$ and $H^*(\PV\otimes\O_\S[z],\overline\p_F)$.

    By part (i), the image $[\left(\Phi^{top}_F\right)^{-1}(\omega)]$ only depends on the cohomology class $[\omega]$ and not on $\omega$ itself. Let $[\left(\Phi^{top}_F\right)^{-1}(\omega)] = \sum_{k=1}^\mu a_k(\bs,z) [\phi_k]$ in $p_*\O_\S$. Then $[\omega] = \Phi^{top}_F [\left(\Phi^{top}_F\right)^{-1}(\omega)] = \sum_{k=1}^\mu a_k(\bs,z) \Phi^{top}_F ( [\phi_k]) = \sum_{k=1}^\mu a_k(\bs,z) [\phi_k]$ in $\H_F^{(0)}$.
\end{proof}

\subsubsection{Example}
For $f = x_ 1^3 + x_ 2^3 x_ 1$ as in Section~\ref{section: computational aspects} take $\{ \phi_1,\dots, \phi_7 \} = \left\{1, x_ 1, x_ 1^2, x_ 2, x_ 1 x_ 2, x_ 1^2 x_ 2, x_ 2^2 \right\}$ and $F = f + \sum_{\alpha=1}^7 s_\alpha \phi_\alpha$.
iDenote by $[\phi ]_F$ the class of $\phi$ in $p_\ast \O_C$. Composed with $\Upsilon_0$ and $\Upsilon_1$ the inverse $(\Phi_F^{top})^{-1}$ gives the following map $\H_F^{(0)} \to p_\ast \O_C[z]$
\begin{align*}
    & [x_ 2^3 d^2\bx] \mapsto [x_ 2^3 ]_F, \quad  [x_1x_ 2^2 d^2\bx] \mapsto [x_1x_ 2^2]_F, \quad  [x_ 1^3 d^2\bx] \mapsto  [x_ 1^3]_F - z \frac {2} {9}[1]_F ,
    \\
    & [x_ 2^4 d^2\bx] \mapsto [x_ 2^4]_F,
    \quad
    [x_ 1 x_ 2^3 d^2\bx] \mapsto [x_ 1 x_ 2^3]_F - \frac{z}{3}[1]_F,
    \quad
    [x_ 1^2 x_ 2^2 d^2\bx] \mapsto  [x_ 1^2 x_ 2^2]_F +  z \frac{2}{27} s_6[1]_F,
    \\
    & [x_ 1^3 x_ 2 d^2\bx] \mapsto  [x_1^3 x_2 ]_F - \frac {z} {9} [x_ 2]_F -  z \frac{10}{243} s_6^2[1]_F,
    \\
    & [x_ 1^4 d^2 \bx] \mapsto  [x_ 1^4]_F + z \frac {5} {9}[x_ 1]_F - \frac{5}{81} s_6 z [x_2[_F - z \frac{50 s_6^3 }{2187}[1]_F - z \frac{4 s_3}{27}[1]_F.
\end{align*}

\subsection{Dubrovin connection}
One notes immediately that $(\PV \otimes \O_\S, \bar \p_F, \p)$ is endowed with the connection as follows. Consider the operators on $\PV \otimes \O_\S((z^{-1}))$
\begin{align}
    & \nabla_v^\PV (a) := v(a) + z^{-1} a v(F), \quad \forall v \in \T_\S,
    \\
    & \nabla_{\frac{\p}{\p z}}^\PV (a) := \frac{\p a}{\p z} - z^{-2} a F.
\end{align}
Direct computations show that
\[
    [\nabla_v^\PV, \overline\p_F + z  \p] = 0, \quad [\nabla_{\frac{\p}{\p z}}^\PV, \overline \p_F + z \p] = z^{-1} (\overline \p_F + z \p).
\]
We conclude that $\nabla_v^\PV$ and $\nabla_{\frac{\p}{\p z}}^\PV$ are well--defined on the cohomology $H^\ast( \PV \otimes \O_\S((z^{-1})), \bar \p_F + z\p)$.

\begin{remark}
    At this point we could have chosen to speak in terms of the so-called ``z--connections''. Namely, consider the operators $z \nabla_v^\PV$ and $z^2 \nabla_{\frac{\p}{\p z}}^\PV$ acting on $H^\ast( \PV \otimes \O_\S[z], {\bar \p_F + z\p})$.
    However some choice is already done on the singularity theory side --- by extending $\H_F^{(0)}$ to $\H_F$.
\end{remark}

Fix a good basis $\omega_1,\dots,\omega_\mu$ of $\H_F^{(0)}$, and let $\phi_1,\dots,\phi_\mu$ be the $\CC[\bx]\otimes\O_\S$--elements such that
\[
 \omega_\alpha = [ \phi_\alpha d^N\bx] \ \mod z\H_F^{(0)}.
\]
Then the classes $[\phi_1 \cdot 1],\dots,[\phi_\mu \cdot 1]$ give a basis of $H^\ast(\PV \otimes \O_\S,\overline \p_F)$ as an $\O_S$--module.

Obviously, $\Upsilon_1$ extends to an isomorphism $H^\ast(\PV \otimes \O_\S(z), \bar \p_F + z\p)  \cong \H_F$ that we denote by the same letter.

\begin{proposition}\label{prop: connections match}~
\begin{enumerate}
    \item[(i)] There is a trivialization $\Phi^\omega$ of $(\PV \otimes \O_\S, \bar \p_F, \p)$ such that
        \[
            \Upsilon_1\Phi^\omega\Upsilon_0^{-1}: p_\ast\O_\C \to \H_F^{(0)} \quad \text{maps} \quad [\phi_\alpha] \mapsto \omega_\alpha.
        \]
    \item[(ii)] The Gauss--Manin connection satisfies
    \[
        \nabla^\GM = \Upsilon_1^{-1} \cdot \nabla^\PV \cdot \Upsilon_1.
    \]
    \item[(iii)] The Dubrovin connection fixed by a good basis satisfies
    \[
        \nabla^\omega = (\Phi^\omega)^{-1}\Upsilon_1^{-1} \cdot \nabla^\PV \cdot \Upsilon_1 \Phi^\omega.
    \]
\end{enumerate}

\end{proposition}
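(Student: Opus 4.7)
The plan is to treat the three parts in order, with the construction of $\Phi^\omega$ in part~(i) as the technical heart.

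\emph{Part (i).} I will modify the topological trivialization $\Phi^{top}_F$ of Section~5.1 by a suitable $R$-matrix. By Proposition~\ref{prop: tautology}, the composition $\Upsilon_1\Phi^{top}_F\Upsilon_0^{-1}: p_\ast\O_\C[z]\to\H_F^{(0)}$ sends $[\phi_\alpha]\mapsto [\phi_\alpha d^N\bx]$. Since $\{\omega_\alpha\}$ is an $\O_\S[z]$-basis of $\H_F^{(0)}$ with $\omega_\alpha \equiv [\phi_\alpha d^N\bx] \mod z\H_F^{(0)}$, Nakayama's lemma gives a transition matrix $M(z,\bs)=\Id+zM_1+z^2M_2+\cdots$ with $M_k\in\mathrm{Mat}_\mu(\O_\S)$:
\[
    \omega_\alpha = \sum_\beta M_{\beta\alpha}(z,\bs)\,[\phi_\beta d^N\bx].
\]
The task reduces to lifting $M$ to an endomorphism $R^\omega$ of $\PV\otimes\O_\S[z]$ commuting with $\overline\p_F$ and realizing $M$ on cohomology in the basis $\{[\phi_\alpha\cdot 1]\}$; then $\Phi^\omega := \Phi^{top}_F\circ R^\omega$ is again a trivialization by Section~\ref{section: different trivializations}, and since $\Phi^{top}_F$ fixes each $\phi_\alpha\cdot 1$, the induced map sends $[\phi_\alpha]\mapsto\omega_\alpha$.

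For the lift, I choose a vector-space splitting $\PV\otimes\O_\S = H\oplus B\oplus C$ with $H := \O_\S\langle\phi_1\cdot 1,\dots,\phi_\mu\cdot 1\rangle$ representing cohomology, $B := \Image(\overline\p_F)$, and $C$ complementary to $\ker(\overline\p_F)=H\oplus B$. Extending to $\PV\otimes\O_\S[z]$ by $\O_\S[z]$-linearity, I declare $R^\omega$ to act as $M(z,\bs)$ on $H[z]$ and as the identity on $B[z]\oplus C[z]$. The relation $[R^\omega,\overline\p_F]=0$ is verified summand by summand: both sides vanish on $H[z]$ and on $B[z]$, and on $C[z]$ they agree because $\overline\p_F(C)\subset B$ where $R^\omega$ is the identity.

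\emph{Part (ii).} This is a direct formula comparison. For $v\in\T_\S$ and a representative $\phi\cdot 1\in\PV\otimes\O_\S[z]$,
\[
    \nabla^{\PV}_v(\phi\cdot 1) = v(\phi)\cdot 1 + z^{-1}\phi\,v(F)\cdot 1,
\]
whose image under $\Upsilon_1$ is $[v(\phi)d^N\bx]+z^{-1}[\phi v(F)d^N\bx] = \nabla^{\GM}_v\Upsilon_1([\phi\cdot 1])$. The analogous check handles the $\p/\p z$ direction. The commutation relations $[\nabla^{\PV}_v,\overline\p_F+z\p]=0$ and $[\nabla^{\PV}_{\p/\p z},\overline\p_F+z\p]=z^{-1}(\overline\p_F+z\p)$ recalled in Section~5 ensure the computation descends to cohomology. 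For part~(iii), by construction (Section~\ref{subsection: DC from GM}) $\nabla^\omega$ is $\nabla^{\GM}$ expressed in the good basis $\omega_\bullet$, equivalently the pullback $A^{-1}\nabla^{\GM}A$ along the $\O_\S[z]$-module isomorphism $A := \Upsilon_1\Phi^\omega\Upsilon_0^{-1}$ furnished by part~(i). Substituting $A$ and applying (ii) gives
\[
    \nabla^\omega = (\Phi^\omega)^{-1}\Upsilon_1^{-1}\nabla^{\PV}\Upsilon_1\Phi^\omega,
\]
with the outer factors of $\Upsilon_0$ absorbed into the implicit identification $p_\ast\O_\C[z]\cong H^\ast(\PV\otimes\O_\S[z],\overline\p_F)$.

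The main obstacle is the construction of the lift $R^\omega$ in part~(i): one must produce a single endomorphism of $\PV\otimes\O_\S[z]$ commuting with the Koszul-type differential $\overline\p_F$ whose induced cohomological action equals a prescribed $\O_\S[z]$-linear matrix. The Hodge-type splitting of the Koszul complex associated with the regular sequence $(\p F/\p x_1,\dots,\p F/\p x_N)$ makes this feasible, but careful summand-by-summand verification is needed both for the commutation with $\overline\p_F$ and for confirming that the composite $\Phi^{top}_F\circ R^\omega$ inherits the trivialization identity while inducing the prescribed map on cohomology.
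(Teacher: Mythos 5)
Your proof is correct and follows the same skeleton as the paper's: both reduce part (i) to post/pre-composing the topological trivialization $\Phi^{top}_F$ with a chain-level lift of the transition matrix $M$ between $\{[\phi_\alpha d^N\bx]\}$ and $\{\omega_\alpha\}$, and both treat (ii) as a direct formula comparison and (iii) as an immediate corollary. The one genuine difference is the construction of the lift. The paper writes every element of $\PV\otimes\O_\S$ in the regular-sequence basis $\prod_k(\p F/\p x_k)^{p_k}\phi_\alpha\wedge\theta_I$ and lets $M$ act only on the $\phi_\alpha$-factor; because it composes as $M\Phi^{top}_F$, it must (and does claim to) verify that this lift commutes with both $\overline\p_F$ and $\p$. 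You instead compose in the other order, $\Phi^{top}_F\circ R^\omega$, so only commutation with $\overline\p_F$ is needed, and you obtain it from a Hodge-type splitting $H\oplus\Image(\overline\p_F)\oplus C$ with $R^\omega=M$ on $H$ and the identity elsewhere; your summand-by-summand check is valid. Your route buys a weaker verification burden (no compatibility with $\p$ required) at the cost of choosing an $\O_\S$-module splitting of $\Image(\overline\p_F)$ inside $\ker(\overline\p_F)$ --- you should say a word about why such a splitting exists (freeness of the relevant $\O_\S$-modules over the polydisc, or note that a $\CC$-linear splitting suffices since the induced map on cohomology is $M$ regardless), and note explicitly that $R^\omega=\Id+O(z)$ so that $\Phi^\omega_0=\Id$ and $\Phi^\omega$ is invertible. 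The paper's lift is more explicit and computable in the basis it fixes, which matters for the later algorithmic use of $\Phi^\omega$.
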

\begin{proof}
    The classes of the polynomials $\phi_1,\dots,\phi_\mu$ generate $p_\ast \O_\C$, and there is $M \in \End(\Jac(f)[z])\otimes \O_\S$ such that $\omega_i = M(\phi_i)$.

    Let $M$ act on $\PV\otimes \O_\S$ by $\prod_{k=1}^N \left(\frac{\p F}{\p x_k} \right)^{p_k} \phi_\alpha \wedge_{i\in I} \theta_i  \mapsto \prod_{k=1}^N \left(\frac{\p F}{\p x_k} \right)^{p_k} M(\phi_\alpha) \wedge_{i\in I} \theta_i$. This is a linear map commuting with $\overline{\p}_F$ and $\p$.
    Then the composition $\Phi^\omega := M \Phi^{top}$ is the trivialization claimed in (i).

    The other two statements follow immediately from (i).
\end{proof}

\[
    \begin{tikzcd}
    H^\ast(\PV\otimes \O_\S((z^{-1})),\overline{\p}_F + z \p ) \arrow[loop left, "\nabla^\PV"] \arrow{r}{\Upsilon_1} &  \H_F \arrow[loop right, "\nabla^\GM"]
    \\
    H^\ast(\PV\otimes \O_\S,\overline{\p}_F)((z^{-1})) \arrow[loop left, "\nabla^\omega"] \arrow{r}{\Upsilon_0} \arrow{u}{\Phi^\omega} &  p_\ast \O_\C \arrow{u}((z^{-1}))
    \end{tikzcd}
\]

\section{Primitive form calculus}

Another way to construct a Dubrovin--Frobenius manifold in Saito theory is via the period mapping of K. Saito (\cite{SK3}).

Fix an element $\zeta \in \H_F^{(0)}$ and consider the map $\Psi: \T_\S \to \H_F^{(0)} / z \H_F^{(0)}$ given by $\Psi(X) := z \nabla_X \zeta$. If one makes a special choice of $\zeta$, called nowadays the \textit{primitive form} of K. Saito, then $\Psi$ is an isomorphism of $\O_\S$--modules transforming the connection $\nabla^{\GM}$ to a Dubrovin connection.

The choice of primitive form is usually not unique. Its existence was proved by M. Saito \cite{SM1,SM2}. He also proved that there is a one-to-one correspondence between primitive forms and good bases of the previous sections.


\subsection{``Perturbative'' primitive form calculus}
Fix a point $s \in \calS$. It was observed in \cite{LLS} that the formal operator $e^{(F-f)/z}$ commutes two differentials:
\[
    (\overline \p_f + z \p) e^{(F-f)/z} = e^{(F-f)/z} (\overline \p_F + z \p).
\]
This is a starting point to the following recipe to compute the primitive form.

Fix an opposite subspace $\L$ of $\H_f^{(0)}$. Let $B := z\L \cap \H_f^{(0)}$. Introduce the exponential operator $e^{(F-f)/z}: B \to B((z^{-1}))[[\bs]]$ by
\[
 e^{(F-f)/z} \left( [\phi_\alpha d^\bx] \right) := \sum_{k =0}^\infty \sum_{\beta=1}^\mu \sum_{m \ge -k} h^{(k)}_{\alpha\beta,m} \frac{z^m}{k!} [\phi_\beta d^N\bx]
\]
for $h_{\alpha\beta,m}^{(k)}$ obtained by the following equality in $\H_f \otimes \O_\S$:
\[
    [z^{-k} (F-f)^k \phi_\alpha d^N\bx] = \sum_{\beta=1}^\mu \sum_{m \ge -k} h^{(k)}_{\alpha\beta,m} \frac{z^m}{k!} [\phi_\beta d^N\bx].
\]
Note that the exponential operator depends on the choice of the opposite subspace. Its components $h^{(k)}_{\alpha\beta,m}$ assume decomposing an $\H_f$ element in the basis fixed by the opposite subspace.

C. Li, S. Li and K. Saito prove:
\begin{theorem}[Theorem~4.15 of \cite{LLS} and Theorem~3.7 of \cite{LLSS}]
    There exists a unique pair $(\zeta,J)$ with $\zeta \in B[z][[\bs]]$ and $J \in [d^N\bx] + z^{-1}B[[z^{-1}]][[\bs]]$ such that
    \begin{equation}\label{eq: main Jzeta}
        J = e^{(F-f)/z}(\zeta).
    \end{equation}
    The element $\zeta$ is the primitive form fixed by the opposite subspace $\L$.
\end{theorem}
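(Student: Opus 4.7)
The plan is to solve $J = e^{(F-f)/z}(\zeta)$ by induction on the $\bs$-degree, exploiting the direct sum decomposition $\H_f = B[z] \oplus z^{-1}B[z^{-1}]$ that the good opposite subspace $\L$ provides, and then to identify the resulting $\zeta$ with M.~Saito's primitive form via the intertwining identity $(\overline\p_f + z\p)\,e^{(F-f)/z} = e^{(F-f)/z}(\overline\p_F + z\p)$ stated just before the theorem.

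For the recursion, expand $\zeta = \sum_{n \ge 0}\zeta_n$ and $J - [d^N\bx] = \sum_{n \ge 0}\eta_n$ into components of $\bs$-degree $n$, with $\zeta_n \in B[z]$ and $\eta_n \in z^{-1}B[[z^{-1}]]$. Because $F - f$ is linear in $\bs$, the degree-$n$ part of the defining equation reads
\begin{equation*}
    \delta_{n,0}\,[d^N\bx] + \eta_n \;=\; \zeta_n + T_n, \qquad T_n := \sum_{k=1}^{n}\tfrac{1}{k!}\bigl[z^{-k}(F-f)^k\,\zeta_{n-k}\bigr] \in \H_f,
\end{equation*}
with $T_n$ depending only on $\zeta_0,\dots,\zeta_{n-1}$. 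At $n = 0$ the direct sum decomposition immediately forces $\zeta_0 = [d^N\bx]$ and $\eta_0 = 0$. For $n \ge 1$, projecting $T_n = T_n^+ + T_n^-$ onto $B[z]$ and $z^{-1}B[[z^{-1}]]$ splits the equation uniquely into $\zeta_n = -T_n^+$ and $\eta_n = T_n^-$, giving existence and uniqueness simultaneously.

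To identify $\zeta$ with the primitive form of $\L$, the intertwining identity shows that $\nabla^\GM_{\p/\p s_\alpha}\zeta$ on $\H_F^{(0)}$ is computed, via the isomorphism $\Upsilon_1$ of the previous section, from the exponential operator applied to $\phi_\alpha[d^N\bx]/z$. The condition $J - [d^N\bx] \in \L[[\bs]]$ combined with $[d^N\bx] \in B$ then forces the period map $X \mapsto z \nabla^\GM_X \zeta \bmod z \H_F^{(0)}$ to be the tautological isomorphism $\T_\S \cong \H_F^{(0)}/z\H_F^{(0)}$, $\p/\p s_\alpha \mapsto \omega_\alpha$. The remaining primitive-form axioms---quasihomogeneity under the Euler field, $\nabla^\GM_{d/dz}$-flatness, and compatibility with $K_F$---then follow, respectively, from the quasihomogeneity of the decomposition, from $z^{-1}\L \subset \L$, and from the isotropy $\omega_f(\L,\L) = 0$.

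The recursion itself is routine graded linear algebra; the main obstacle is not the construction of $\zeta$, but the verification that the formally built $\zeta$ satisfies \emph{all} the primitive-form axioms simultaneously through the infinite expansion in $\bs$ and $z^{-1}$. The nontrivial bookkeeping here---that each axiom is preserved under the direct-sum projection at every order---is what distinguishes a good opposite subspace from a general one, and is precisely the content of Theorem~4.15 of \cite{LLS} and Theorem~3.7 of \cite{LLSS}.
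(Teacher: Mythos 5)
First, a remark on the comparison itself: the paper does not prove this statement — it is imported verbatim from Theorem~4.15 of \cite{LLS} and Theorem~3.7 of \cite{LLSS} — so there is no internal proof to measure you against, only the cited literature and the way the paper later uses the result.

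Your existence-and-uniqueness argument is correct and is the standard one. Since $F-f$ is linear in $\bs$, the $\bs$-degree-$n$ component of Eq.~\eqref{eq: main Jzeta} reads $\delta_{n,0}[d^N\bx]+\eta_n=\zeta_n+T_n$ with $T_n$ determined by $\zeta_0,\dots,\zeta_{n-1}$, and the decomposition $\H_f=B[z]\oplus z^{-1}B[[z^{-1}]]$ supplied by $\L$ splits this uniquely into $\zeta_n=-T_n^{+}$, $\eta_n=T_n^{-}$ (each $T_n$ lies in $z^{-n}B[z]$, so the splitting is well defined). This is exactly the projection mechanism that reappears, transported to $\Jac(f)$ via $\Phi_f^\omega$, in the paper's own Theorem~\ref{theorem: primitive form}, so this half of your proposal is sound and matches how the paper exploits the result downstream.

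The gap is in the second half. The assertion that $\zeta$ \emph{is} the primitive form attached to $\L$ is itself a substantive claim: a primitive form must satisfy a list of axioms (the period map $X\mapsto z\nabla^{\GM}_X\zeta$ being an isomorphism on a neighbourhood of $0\in\S$, homogeneity with respect to the Euler field, the conditions on $K_F$ evaluated on $z\nabla^{\GM}\zeta$, and the correct behaviour under $\nabla^{\GM}_{d/dz}$). Your computation that the period map is tautological modulo $z\H_F^{(0)}$ at $\bs=0$, hence invertible near the origin, is fine; and you correctly match each remaining axiom to the property of the good opposite subspace that should imply it (quasihomogeneity of the splitting, $z^{-1}\L\subseteq\L$, isotropy of $\L$ for $\omega_f$). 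But none of these implications is actually carried out — you assert that they ``follow'' and then concede that the verification is precisely the content of the cited theorems. As a standalone proof the identification of $\zeta$ with the primitive form is therefore incomplete, although what you do prove (existence and uniqueness of the pair $(\zeta,J)$) is complete and is already more than the paper itself supplies.
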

The theorem above works in the ring of formal power series in $\bs$; however, it is known from the general theory that there is a unique analytic primitive form associated to the fixed opposite filtration. Then the formal primitive form $\zeta$ above is a series expansion of the analytic primitive form (cf. Section 4.3 of \cite{LLS}).

\subsection{BV algebra primitive form calculus}
For any $p \ge 0$, let $\CC[\bs]_p$ stand for the finite--dimensional subspace of $\CC[\bs]$ spanned by monomials of total degree $p$ in $s_1,\dots,s_\mu$. For any $\phi \in \CC[\bs]$, denote by $\phi_{(p)}$ the image of $\phi$ under the projection $\CC[\bs] \to \CC[\bs]_{p}$.

Fix a basis $\omega_1,\dots,\omega_\mu \in \H_F^{(0)}$.
For $\zeta \in \H_F^{(0)}$, let $\zeta_{(p)} \in \H_F^{(0)}$ be its $p$--homogeneous component in $\bs$. Namely,
\[
    \zeta = \sum_{\alpha=1}^\mu d_\alpha(\bs,\bz) \omega_\alpha
    \quad
    \Rightarrow
    \quad
    \zeta_{(p)} = \sum_{\alpha=1}^\mu \left( d_\alpha(\bs,\bz) \right)_{(p)}\omega_\alpha.
\]
Obviously we have $\zeta = \sum_{p \ge 0} \zeta_{(p)}$.

Denote by $\pi_{>0}$ and $\pi_{\le 0}$ the projections $\CC((z^{-1})) \to z\CC[z]$ and $\CC((z^{-1})) \to \CC[[z^{-1}]]$ given by
\begin{align*}
 \pi_{>0}(\sum_{k = -\infty}^m a_k z^k) = \sum_{k = 1}^m a_k z^k,
 \quad
 \pi_{\le 0}(\sum_{k = -\infty}^m a_k z^k) = \sum_{k = -\infty}^0 a_k z^k.
\end{align*}

\begin{theorem}\label{theorem: primitive form}
Let $\Phi_f^\omega$ be the $\p$--trivialization associated with the opposite subspace $\L$.
Then the primitive form $\zeta$ associated to $\L$ is found via the recursive formula for its components in the associated good basis:
\begin{align}
    \zeta_{(p)} = - \left[ \pi_{>0} \cdot (\Phi_f^\omega)^{-1} \Big( \sum_{a=1}^p \frac{(F-f)^a}{z^a a!} \zeta_{(p-a)}  \Big) \right]_f, \quad p \ge 1,
\end{align}
where $[-]_f$ stands for taking the classes in $\Jac(f)$.
\end{theorem}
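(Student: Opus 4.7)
The plan is to derive the recursion directly from the Li-Li-Saito characterization $J = e^{(F-f)/z}\zeta$, by expanding this identity order by order in the unfolding parameters $\bs$ and projecting onto the good-basis decomposition $\H_f = B[z]\oplus z^{-1}B[z^{-1}]$.

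First, I would observe that $F-f = \sum_\alpha s_\alpha\phi_\alpha$ is $\bs$-homogeneous of degree one, so expanding $e^{(F-f)/z} = \sum_{a\ge 0}\frac{(F-f)^a}{z^a a!}$ and collecting the $\bs$-degree $p$ component (for $p\ge 1$) yields
\[
 J_{(p)} \;=\; \zeta_{(p)} + \sum_{a=1}^p \frac{(F-f)^a}{z^a\, a!}\,\zeta_{(p-a)} \quad\text{in } \H_f[[\bs]].
\]
The defining conditions force $\zeta_{(p)} \in B[z]$ (non-negative $z$-powers in the good-basis coordinates) and $J_{(p)} \in z^{-1}B[[z^{-1}]]$ for $p\ge 1$. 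Applying the projector $\pi_{>0}$ onto strictly positive $z$-powers of the good-basis decomposition therefore annihilates $J_{(p)}$, and, using the normalization $\pi_{>0}(\zeta_{(p)}) = \zeta_{(p)}$ discussed below, produces
\[
 \zeta_{(p)} \;=\; -\pi_{>0}\Bigl(\sum_{a=1}^p \frac{(F-f)^a}{z^a\,a!}\,\zeta_{(p-a)}\Bigr).
\]

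Second, I would make $\pi_{>0}$ effective by passing through the good-basis trivialization. By Proposition~\ref{prop: tautology} applied to $f$, the inverse $(\Phi_f^\omega)^{-1}$ realizes the explicit isomorphism $\H_f^{(0)} \to \Jac(f)[z]$ sending $\omega_\alpha \mapsto [\phi_\alpha]_f$, and extends $\CC((z^{-1}))$-linearly to the completion $\H_f \to \Jac(f)((z^{-1}))$. Since $\pi_{>0}$ acts only on the $z$-variable of the good-basis coordinates, it commutes with $(\Phi_f^\omega)^{-1}$, while the trailing $[-]_f$ in the statement of Theorem~\ref{theorem: primitive form} records the resulting class in $\Jac(f)$ that corresponds to an element of $B$ under the good-basis identification. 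Rewriting the displayed identity in this language reproduces exactly the stated recursion.

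The main obstacle is justifying the normalization $\pi_{>0}(\zeta_{(p)}) = \zeta_{(p)}$ for $p\ge 1$, i.e., that no $z^0$-component of $\zeta_{(p)}$ survives. I would argue this by induction on $p$, using $\zeta_{(0)} = [d^N\bx] = \omega_1$ as base case and analyzing the $z^0$-component of the equation above: the good-basis representatives $[\phi_\alpha d^N\bx]$ of $\omega_\alpha$, combined with the weight structure imposed by quasihomogeneity and the Li-Li-Saito uniqueness, force the $z^0$-part of the right-hand side to vanish at each step. Once this is in place the recursion closes, matching the formula of the theorem.
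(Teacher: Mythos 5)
Your proposal follows essentially the same route as the paper's proof: both start from the Li--Li--Saito identity $J = e^{(F-f)/z}\zeta$, transport everything through the good-basis trivialization $(\Phi_f^\omega)^{-1}$ so that the splitting $\H_f = B[z]\oplus z^{-1}B[z^{-1}]$ becomes a splitting by powers of $z$ on which $\pi_{>0}$ acts, annihilate the $J$-side by the projection, and then extract the recursion degree by degree in $\bs$ using that $F-f$ is linear in $\bs$. The normalization $\pi_{>0}(\zeta_{(p)})=\zeta_{(p)}$ that you single out as the main obstacle is precisely the step the paper dispatches in one phrase as ``the positivity ansatz on $\zeta$,'' so your write-up is, if anything, more explicit about where the remaining content of the argument lies.
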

\begin{proof}
    Using the isomorphism $\Upsilon_0$ of Eq.~\ref{eq: preBL as cohomology}, we can equivalently solve Eq.~\eqref{eq: main Jzeta} in $H^\ast(\PV((z^{-1})) \otimes \O_\S,\overline\p_f + z\p)$.
    Slightly abusing the notation, we use the same letters $J$ and $\zeta$ for the respective elements in $H^\ast(\PV((z^{-1})) \otimes \O_\S,\overline\p_f + z\p)$ and $H^\ast(\PV [z] \otimes \O_\S,\overline\p_F + z\p)$.

    Note that under $\Upsilon_0$ and $\Upsilon_1$, the exponential operator $e^{(F-f)/z}$ is mapped to the operator of multiplication by the $\PV^0[[z^{-1}]]$--element $e^{(F-f)/z}$.

    We have by construction $\Phi_f^\omega(J) = J$ because $\Phi_f^\omega$ acts identically on the good basis elements.
    Then Eq.~\eqref{eq: main Jzeta} is equivalent to
    \[
        J = \left[ \pi_{\le 0} \cdot (\Phi_f^\omega)^{-1} \Big( \sum_{k \ge 0} \frac{(F-f)^k}{z^k k!} \zeta \Big)  \right]_f
    \]
    where both $\frac{(F-f)^k}{z^k k!}$ and $\zeta$ inside the brackets are multiplied as $\PV^0$--elements.

    Then we have
    \[
        \left[ \pi_{> 0} \cdot (\Phi_f^\omega)^{-1} \Big( \sum_{k \ge 0} \frac{(F-f)^k}{z^k k!} \zeta \Big) ) \right]_f = 0.
    \]
    Using the linearity of $\Phi_f^\omega$ and the positivity ansatz on $\zeta$, we have
    \[
        (\Phi_f^\omega)^{-1} (\zeta) = \zeta = - \left[ \pi_{>0} \cdot (\Phi_f^\omega)^{-1} \Big( \sum_{k \ge 1} \frac{(F-f)^k}{z^k k!} \zeta \Big) \right]_f
    \]
    because $\Phi_f^\omega$ acts identically on the good basis elements. This concludes the proof because $\Phi_f^\omega$ does not depend on the variables $\bs$, and $F-f \in \CC[\bx] \otimes \CC[\bs]_1$.
\end{proof}
 
Related to our theorem is Theorem 5.16 of \cite{LLS}. Our theorem uses a completely different technique, whereas loc.cit. basically performs many computations in the Brieskorn lattice. We think that our theorem is also more efficient for computational purposes. A big advantage of our theorem is that one does not need to apply the exponential operator by computing the components $h^{(k)}_{\alpha\beta,m}$. In our theorem, this is just an element of the BV algebra. One more point that differs our result from that of Li, Li and Saito is that we work in the rings $\CC[\bs]_p$ rather than $\CC[\bx]_{\le p}$ as in loc.cit..

\section{R--matrix of a Dubrovin--Frobenius manifold}
It was first observed by B. Dubrovin (cf. Lemma~4.2 \cite{D2}) that under the action of a formal power series $R = \Id + \sum_{k \ge 1} z^k R_k$ with $R_k \in \mathrm{Mat}(\CC,\mu)$ (called gauge transform in loc.cit.), Dubrovin's connection~\ref{eq: Dubrovin connection} assumes a very simple form:
\[
    R^{-1} \cdot \widetilde \nabla_{\frac{d}{dz}} (R \cdot X) = \frac{d X}{dz} - z^{-2} B_0(X).
\]
This formal power series satisfies
\begin{equation}\label{eq: R-matrix main}
 [B_0, R_{m+1}] = (m + B_\infty) R_m, \quad m \ge 0.
\end{equation}
It was introduced by B. Dubrovin for classification purposes and later used by A. Givental in \cite{G04} in order to define the higher genus potential of the Dubrovin--Frobenius manifold.

\begin{proposition}[Proposition of Section~1.3 \cite{G04}]\label{prop: Givental-main}
For any Dubrovin--Frobenius manifold in a neighborhood of a semisimple point, the following holds:
 \begin{itemize}
  \item The solution $R$ to Eq.~\eqref{eq: R-matrix main} exists. It can be chosen to satisfy the ``symplectic'' condition
    \[
    R(z)\eta^{-1} R^T(-z) = \eta^{-1}.
    \]
  \item If $R$ is a homogeneous solution to Eq.~\eqref{eq: R-matrix main}:
  \[
    R_k = - \frac{1}{k} \iota_E d R_k, \quad \forall k \ge 1,
  \]
  then it satisfies the symplectic condition above and is unique.
 \end{itemize}
\end{proposition}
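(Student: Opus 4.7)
The plan is to reduce to canonical coordinates $u_1,\dots,u_\mu$, which exist at any semisimple point, in which $B_0 = \diag(u_1,\dots,u_\mu)$ with pairwise distinct entries. I work in the normalized canonical basis $\frac{1}{\sqrt{\eta_{ii}}}\partial_{u_i}$, in which $\eta$ becomes the identity matrix and $B_\infty$ becomes antisymmetric (a standard consequence of $B_\infty + B_\infty^T = (2-d)\Id$ combined with $B_\infty$ having no diagonal entries in this basis). Writing $R_m = (R_m^{ij})$ in this basis, the recursion $[B_0,R_{m+1}] = (m + B_\infty)R_m$ splits into an off-diagonal part $(u_i - u_j)R_{m+1}^{ij} = ((m + B_\infty)R_m)^{ij}$ for $i \neq j$, which uniquely determines $R_{m+1}^{ij}$ by division, and a diagonal part in which the left-hand side vanishes identically, thereby imposing the consistency condition $((m+B_\infty)R_m)^{ii} = 0$ and leaving $R_{m+1}^{ii}$ undetermined.

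For existence, I proceed by induction starting from $R_0 = \Id$. At each step, the off-diagonal entries of $R_{m+1}$ are read off from the formula above. To fix the diagonal and to verify the consistency condition, I expand the symplectic relation as $\sum_{a+b=k}(-1)^b R_a \eta^{-1} R_b^T = 0$ for $k \geq 1$. Its diagonal component yields an explicit quadratic recursion expressing $R_{k+1}^{ii}$ in terms of already constructed entries, and its off-diagonal component, combined with the antisymmetry of $B_\infty$, implies the consistency $((m+B_\infty)R_m)^{ii} = 0$ needed to close the induction.

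For the second claim, I impose the homogeneity ansatz $R_k = -\frac{1}{k}\iota_E dR_k$, which is a first-order linear PDE forcing $R_k$ to be $E$-homogeneous of weight $-k$. Combined with the off-diagonal recursion, this pins down the diagonal entries uniquely: the unknown diagonal of $R_k$ is $E$-homogeneous of weight $-k$ and is forced by the homogeneity equation to satisfy an algebraic (not differential) constraint whose coefficient is nonzero for $k \geq 1$. To verify that this $R$ automatically satisfies the symplectic relation, I set $S(z) := R(z)\eta^{-1}R^T(-z) - \eta^{-1}$ and use the recursion \eqref{eq: R-matrix main} together with the anti-selfadjointness of $B_\infty$ to show that the coefficients $S_k$ satisfy $[B_0,S_{k+1}] = k S_k$, while the homogeneity of $R$ propagates to homogeneity of $S$. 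The two conditions together force $S \equiv 0$ by a straightforward induction on $k$.

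The main obstacle is the inductive verification that $((m+B_\infty)R_m)^{ii} = 0$, since this is what allows the commutator equation to close. This requires feeding the symplectic condition at order $m$ back into the recursion in a nontrivial way, using both the antisymmetry of $B_\infty$ in normalized canonical coordinates and the vanishing of the diagonal of any commutator with $B_0$. The delicate compatibility between the recursion, the symplectic constraint, and the homogeneity ansatz is the crux of Givental's argument, and it relies essentially on semisimplicity through the nonvanishing of $u_i - u_j$ in the off-diagonal division.
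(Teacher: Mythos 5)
First, be aware that the paper offers no proof of this proposition at all: it is imported verbatim from Givental \cite{G04} as a black box, so there is no internal argument to compare yours against, and I can only judge your reconstruction on its own terms. Your overall strategy --- pass to normalized canonical coordinates where $\eta=\Id$, $B_0=\diag(u_1,\dots,u_\mu)$ has distinct entries and $B_\infty$ is antisymmetric, determine the off-diagonal entries of $R_{m+1}$ by dividing by $u_i-u_j$, and fix the diagonal by auxiliary conditions --- is the standard one. But the specific mechanism you propose for the diagonal does not work. In the order-$k$ coefficient of $R(z)\eta^{-1}R^T(-z)=\eta^{-1}$ the only terms containing $R_k$ are $R_k+(-1)^kR_k^T$, whose diagonal is $(1+(-1)^k)R_k^{ii}$. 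For odd $k$ this vanishes identically, so the symplectic relation places no constraint on $R_k^{ii}$ at odd orders --- already $R_1^{ii}$ is left undefined by your recipe and the induction cannot start. (At odd orders the diagonal part of the symplectic relation is instead a constraint on the previously constructed $R_1,\dots,R_{k-1}$.)

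The correct source of the diagonal is Eq.~\eqref{eq: R-matrix main} itself, read one step ahead: solvability of $[B_0,R_{m+2}]=\bigl((m+1)+B_\infty\bigr)R_{m+1}$ forces $\diag\bigl(((m+1)+B_\infty)R_{m+1}\bigr)=0$, i.e.\ $R_{m+1}^{ii}=-\tfrac{1}{m+1}\sum_{j\neq i}B_\infty^{ij}R_{m+1}^{ji}$, which involves only the already-determined off-diagonal entries because $B_\infty^{ii}=0$. With this definition the consistency condition you worry about holds by construction, existence and uniqueness of the solution of Eq.~\eqref{eq: R-matrix main} follow at once, and the symplectic identity becomes something to verify a posteriori rather than a definition. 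For that verification your identity $[B_0,S_{k+1}]=kS_k$ is also not quite right: a direct computation from Eq.~\eqref{eq: R-matrix main} gives $[B_0,S_{k+1}]=kS_k+[B_\infty,S_k]$, and the induction $S_k=0\Rightarrow S_{k+1}$ diagonal must then be combined with the parity $S_k^T=(-1)^kS_k$ (which kills the diagonal at odd orders) and with homogeneity or the explicit diagonal formula at even orders. Finally, your parenthetical justification of the antisymmetry of $B_\infty$ is garbled: with the paper's normalization $B_\infty=\tfrac{2-d}{2}\Id-\nabla E$ one gets $B_\infty+B_\infty^T=0$ directly from $Lie_E(\eta)=(2-d)\eta$; the relation ``$\,\cdot+\cdot^T=(2-d)\Id$'' is a property of $\nabla E$, not of $B_\infty$.
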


The unique solution $R$ of the proposition above will be called the {\it R--matrix of a Dubrovin--Frobenius manifold} (also called Givental's R--matrix). Its importance is justified by the following theorem.

\begin{theorem}[\cite{G01b, G04}]
    Let $(M,\circ,\eta,e)$ be a Dubrovin--Frobenius manifold.
    At a semisimple point $p \in M$, its potential is reconstructed from the topological theory potential of $(T_pM,\circ,\eta)$ via the Givental action of the R--matrix of the Dubrovin--Frobenius manifold.
\end{theorem}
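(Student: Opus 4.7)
The plan is to work locally at the semisimple point $p$ and reduce the statement to an asymptotic analysis of the Dubrovin connection $\widetilde\nabla$ of Eq.~\eqref{eq: Dubrovin connection} at its irregular singularity $z=0$. Choose canonical coordinates $u_1,\dots,u_\mu$ near $p$ (available by semisimplicity) in which the product diagonalises, $\tfrac{\p}{\p u_i}\circ\tfrac{\p}{\p u_j}=\delta_{ij}\tfrac{\p}{\p u_i}$, and let $\Psi$ denote the change of frame from the normalised canonical frame to the flat frame. In the canonical frame, $\widetilde\nabla$ takes the standard isomonodromic form with an irregular pole $U/z$ at $z=0$, where $U=\diag(u_1,\dots,u_\mu)$. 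Standard asymptotic theory for such connections produces a unique formal fundamental solution of the form $Y(z)=\Psi R(z)e^{U/z}$ with $R=\Id+\sum_{k\ge 1}R_k z^k$. Flatness of $\widetilde\nabla$ along $M$ forces $R$ to satisfy the recursion~\eqref{eq: R-matrix main}; self-adjointness of $\widetilde\nabla$ with respect to $\eta$ yields the symplectic condition $R(z)\eta^{-1}R^T(-z)=\eta^{-1}$; and quasi-homogeneity of $M$ yields the homogeneity condition of Proposition~\ref{prop: Givental-main}. Hence the $R$ so produced is exactly the Givental R-matrix of $M$.

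Next, encode the potential $F_M$ through the J-function $J(t,z)\in T_pM[[z^{-1}]]$, characterised as the unique flat section of $\widetilde\nabla$ with the standard normalisation $J=1+O(z^{-1})$ at $z=\infty$; its $z^{-1}$-coefficient is $\sum_\alpha t_\alpha\p/\p t_\alpha$, and its higher coefficients are generating functions for the second derivatives of $F_M$ in flat coordinates. In particular $J$ determines $F_M$ up to terms quadratic in $t$. On the other hand, the Givental action of a series $\widetilde R=\Id+\sum_{k\ge 1}z^k\widetilde R_k$ on a formal Dubrovin--Frobenius potential is given by the tree-level graph sum of~\cite{DbSS} (or the quadratic-Hamiltonian formula of~\cite{G01a}); its effect on the associated J-function is left multiplication by $\widetilde R(z)$. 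Applying this to the potential $F_{\mathrm{top}}$ of the tangent-space Frobenius algebra $(T_pM,\circ,\eta)$, whose J-function at the origin is the elementary expression $J_{\mathrm{top}}(t,z)=\exp(\sum_\alpha t_\alpha\p/\p t_\alpha/z)$, and comparing with $J$ yields the claimed identification of the germ of $F_M$ at $p$ with $R\cdot F_{\mathrm{top}}$.

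The main obstacle is the rigorous passage between the $z=\infty$ data (the J-function, hence $F_M$) and the $z=0$ formal data (the R-matrix). At a semisimple point, the formal fundamental solutions of $\widetilde\nabla$ at the two irregular singularities are related globally only by Stokes matrices in sectors, which contribute genuinely in higher genus but not at tree level. Proving that the tree part of Givental's quantised R-action indeed implements conjugation of the J-function by $R(z)$---and thereby dualises the R-matrix characterisation of $\widetilde\nabla$ at $z=0$---is the technical core of the argument, and is the content of~\cite{G01b,G04}. Once this identification is secured, the recovery of the full potential $F_M$ from $J$ and its comparison with $R\cdot F_{\mathrm{top}}$ reduces to bookkeeping that follows from the definitions.
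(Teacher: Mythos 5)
This statement is imported by the paper from \cite{G01b,G04}: the text offers no proof of its own, so there is nothing internal to compare your argument against. Judged on its own terms, your outline correctly reproduces the shape of Givental's argument --- canonical coordinates at the semisimple point, the formal fundamental solution $\Psi R(z)e^{U/z}$ of the Dubrovin connection \eqref{eq: Dubrovin connection} at the irregular singularity $z=0$, the recursion \eqref{eq: R-matrix main} together with the symplectic and homogeneity conditions of Proposition~\ref{prop: Givental-main}, and finally the comparison of genus-zero data via the $J$-function. This is indeed the route taken in the cited references.

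As a proof, however, it is incomplete, and you say so yourself: the assertion that the genus-zero part of the Givental action of $R$ transforms the $J$-function (equivalently, the overruled Lagrangian cone) of the topological theory into that of $M$, and that the latter coincides with the asymptotic flat-section data of $\widetilde\nabla$ read off at $z=0$, is exactly the content of the theorem, and you defer it to \cite{G01b,G04}. Two smaller points. First, the genus-zero $R$-action is most cleanly stated as $\mathcal{L}\mapsto R\mathcal{L}$ on the Lagrangian cone (or as the tree-level graph sum of \cite{DbSS}); ``left multiplication of the $J$-function by $R(z)$'' is only correct after reparametrising the cone, and for the topological theory the relevant input is the full cone with descendents, not just the exponential primary $J$-function you write down. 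Second, the remark that Stokes matrices ``contribute genuinely in higher genus but not at tree level'' is not an argument; the passage from the $z=\infty$ expansion (which encodes $F_M$) to the $z=0$ formal solution (which encodes $R$) is precisely the analytic step that must be justified. None of this is a defect relative to the paper, which also treats the result as a black box, but the proposal should be read as an annotated citation rather than a proof.
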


It is important to note that the R--matrix of a Dubrovin--Frobenius manifold depends on the choice of the semisimple point $p$.

The coordinate $z$ in the R--matrix above is completely formal for a generic Dubrovin--Frobenius manifold. However, it coincides with the $z$ coordinate of $\H_F^{(0)}$ and $\nabla^\GM$ for Saito theory Dubrovin--Frobenius manifolds. Note that we have there $\wt(z) = 1$. Then the homogeneity condition of the R--matrix above is just $\wt R = 0$.

%

\subsection{R--matrix in PV}
We apply Proposition~\ref{prop: connections match} in order to rewrite the condition on the R--matrix of a Dubrovin--Frobenius manifold.

Denote $\Phi := \Upsilon_1 \Phi^\omega$. We have
\begin{align*}
    & \nabla_{\frac{\p}{\p z}}^\omega (R(a)) = \Phi^{-1} \cdot \nabla^\PV_{\frac{\p}{\p z}} \left( \Phi(R(a)) \right).
\end{align*}
Denote $\widetilde R := \Phi \cdot R$. The equality above rewrites as
\begin{align}\label{eq: widetilde R}
    & \widetilde R \left( \frac{\p a}{\p z} - z^{-2} Fa \right) = \frac{\p \widetilde R}{\p z}(a) + \widetilde R(\frac{\p a}{\p z}) - z^2 F \widetilde R (a).
    \\
    \Leftrightarrow \quad & [F, \widetilde R](a) = z^2 \frac{\p}{\p z} \widetilde R(a)
\end{align}
that should hold in $H^\ast(\PV\otimes\O_\S[z],\overline\p_F + z \p)$ for all $a$.

In what follows we construct the operator $\widetilde R$ solving the equation above explicitly. To do this we need some ingredients.


\begin{lemma}
In a neighborhood of a semisimple point $s \in \S$, there is an operator $\A = \sum_{k \ge 0} A_k z^k \in \End(H^\ast( \PV \otimes \O_\S[z], \bar \p_F + z \p))$ such that $F \cdot \A = \Id$.

\end{lemma}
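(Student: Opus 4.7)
The plan is to work in $\H_F^{(0)}$ via the isomorphism $\Upsilon_1$ of Eq.~\eqref{eq: HPV to HF iso}, where $F \cdot$ becomes an $\O_\S[z]$-linear endomorphism of the free rank-$\mu$ module $\H_F^{(0)}$. After fixing a good basis $\omega_1, \dots, \omega_\mu$, this operator is represented by a matrix $M(\bs, z) \in \Mat(\mu, \O_\S[z])$ with finite $z$-expansion $M = \sum_{k \ge 0} z^k M_k$. I would construct $\A$ as the formal inverse of $M$ in $\Mat(\mu, \O_\S[[z]])$, which amounts to showing that $\det M$ is a unit in $\O_\S[[z]]$ on a neighborhood of $s$.

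The key step is the invertibility of $M_0 := M(\bs, 0)$, which represents multiplication by $F$ on $\H_F^{(0)} / z\H_F^{(0)} \cong p_\ast \O_\C$; fiberwise at $\bs \in \S$ this is multiplication by $F$ in $\Jac(F_\bs)$, equivalently (by Section~\ref{subsection: DC from GM}) the Frobenius algebra operator $E \circ$ of the Dubrovin--Frobenius manifold $M_f^\omega$. At a semisimple point $s$ the algebra $\Jac(F_s) \cong \CC^\mu$ decomposes via idempotents indexed by the (non-degenerate, distinct) critical points $p_1, \dots, p_\mu$ of $F_s$, and in this basis $F \cdot$ is diagonal with entries the critical values $F(p_1), \dots, F(p_\mu)$. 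Provided none of these critical values vanishes --- which is a dense condition on the semisimple locus, failing, for instance, at $\bs = 0$ --- one has $\det M_0(s) = \prod_i F(p_i) \neq 0$, and by continuity $\det M_0$ stays nonzero on a whole neighborhood of $s$.

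With $M_0$ invertible, the construction of $\A$ is a formal power-series inversion: set $A_0 := M_0^{-1}$ and, recursively,
\begin{align*}
    A_n := -M_0^{-1} \sum_{i=1}^{n} M_i A_{n-i}, \quad n \ge 1,
\end{align*}
so that $\A := \sum_{k \ge 0} A_k z^k$ satisfies $\sum_{i+j=n} M_i A_j = \delta_{n,0} \Id$, i.e.\ $F \cdot \A = M \cdot \A = \Id$. The main obstacle is precisely the invertibility of $M_0$: the lemma's ``semisimple point'' should be read in the stronger sense that all critical values of $F_s$ are nonzero (equivalently, that $F$ is a unit in the Jacobian algebra), the setting which is natural for the subsequent R-matrix construction of Proposition~\ref{prop: Givental-main}. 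Once that is secured, the rest of the argument is the routine geometric-series inversion above.
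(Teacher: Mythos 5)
Your proposal is correct and takes essentially the same route as the paper: invert the leading term $F\cdot$ (the operator $E\circ$ on $H^\ast(\PV\otimes\O_\S,\bar\p_F)$) at a semisimple point and then solve for the $A_k$ order by order in $z$ --- your matrix $M=\sum_k M_k z^k$ in a good basis is exactly the operator that the paper's recursion $0\equiv\sum_{a+b=k}\overline\Phi_a(A_bF)$ inverts, via the identification of Proposition~\ref{prop: connections match}. You are in fact more careful than the paper on the one nontrivial input: the paper simply asserts that $E\circ$ is ``known to be invertible at a semisimple point,'' whereas you correctly note that in the idempotent basis $E\circ$ has eigenvalues the critical values of $F(\cdot,\bs)$, so invertibility requires all of them to be nonzero --- a condition that is generic on, but slightly stronger than, the semisimple locus.
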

\begin{proof}
    The action of the operator $F \cdot$ in $H^\ast( \PV \otimes \O_\S, \bar \p_F )$ is given by multiplication by the Euler vector field. This operator is known to be invertible at a semisimple point. Denote this inverse by $A_0$. The higher $A_k$ are found recursively by the following procedure. It's enough to solve
    \[
        (\Phi^{\omega})^{-1} \left( \sum_{k \ge 0} A_k z^k F \right) - 1 \equiv 0 \quad \text{in} \ H^\ast( \PV \otimes \O_\S, \bar \p_F ) \otimes \CC[[z]].
    \]
    Let $(\Phi^{\omega})^{-1} = \sum_{p \ge 0} \overline \Phi_p z^p$. Then the $k$--th order in $z$ of the equation above reads
    \[
        0 \equiv \sum_{a+b = k} \overline \Phi_a (A_b F) = A_k F + \sum_{a=1}^k \overline \Phi_a (A_{k-a} F).
    \]
    It allows to express $A_k$ via $A_0,\dots,A_{k-1}$ becasue $F \cdot$ is invertible.

\end{proof}

One notes immediately that every $A_k$ is homogeneous of degree $-(k+1)$ for all $k \ge 0$.

\begin{example}
 For $F = \frac{1}{3} x_1^3 + s_2 x_1 + s_1$, we have
 \begin{align*}
    \A &= \frac {9 s_1} {4 s_2^3 + 9 s_1^2} -\frac {6 s_2} {4 s_2^3 + 9 s_1^2} x_ 1 + z\left (\frac {27 s_1^2 - 24 s_2^3 } {\left (4 s_2^3 +
        9 s_1^2 \right)^2} - \frac {54 s_1 s_2 } {\left (4 \
s_2^3 + 9 s_1^2 \right)^2} x_ 1 \right)
    \\
    &+ z^2\left (
    \frac {9\left (9 s_1^3 - 32 s_1 s_2^3 \right)} {\left (4 s_2^3 + 9 s_1^2 \right)^3}
    + \frac {6 s_2\left (8 s_2^3 - 63 s_1^2 \right)} {\left (4 s_2^3 + 9 s_1^2 \right)^3}x_ 1
    \right) + O(z^3),
 \end{align*}
 the semisimplicity condition is equivalent to $4 s_2^3 + 9 s_1^2 \neq 0$.
\end{example}

For all $k \ge 1$, let $B_k \in \End(H^\ast( \PV \otimes \O_\S[z], \bar \p_F + z \p))$.
In a neighborhood of a semisimple point $s \in \S$, consider  $\widetilde R^o$ acting by
\[
    \widetilde R^o(a) := a + z A_0\cdot a - \sum_{k \ge 2} z^k \ \wt(a) B_k(a).
\]
Then $\widetilde R^o$ is homogeneous of degree $0$. It satisfies Eq.~\eqref{eq: widetilde R} if and only if $B_k$ satisfy in $H^\ast( \PV \otimes \O_\S[z], \bar \p_F + z \p)$
\[
    \sum_{k \ge 2} F \cdot B_k(a) z^k \equiv - \sum_{k \ge 1} z^{k+1} \ k B_k(a) \wt(a)
\]
where we denote $B_1 := - A_0 \cdot$. This is equivalent to
\[
 \sum_{k \ge 2} B_k(a) z^k \equiv  - \A \cdot \sum_{k \ge 1} z^{k+1} \ k B_k(a) \wt(a).
\]
This equality allows us to find $B_k$ recursively. We have $B_2(a) = - A_0 \cdot B_1(a) \wt(a)$ mod $z$ and
\begin{align*}
    & B_p(a) z^p \equiv  - \A \cdot \sum_{k = 1}^{p-1} z^{k+1} \ k B_k(a) \wt(a) - \sum_{k = 2}^{p-1} B_k(a) z^k + O(z^{p+1})
\end{align*}
for all $p \ge 2$. Note that in the previous equality we have $B_p$ only on the left-hand side and only known operators on the right-hand side.

For a fixed good basis, consider the polynomials $\phi_\bullet$ as in Eq.~\eqref{eq: omega_bullet to phi_bullet} satisfying $\omega_a \equiv [ \phi_a d^N \bx]$ mod $z \H_F^{(0)}$.

The main theorem of the current paper is the following.
\begin{theorem}\label{theorem: R--matrix}
    The operator $(\Phi^\omega )^{-1} \widetilde R^o$ written in the basis $\phi_1,\dots,\phi_\mu$ is the R--matrix of the Dubrovin--Frobenius manifold $M^\omega_F$.
\end{theorem}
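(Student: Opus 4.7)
The plan is to recognize the operator $R := (\Phi^\omega)^{-1}\widetilde R^o$, written in the basis $\phi_1,\dots,\phi_\mu$ of $H^\ast(\PV\otimes\O_\S,\overline\p_F)$, as a weight-homogeneous solution to the defining recursion \eqref{eq: R-matrix main} of Givental's R-matrix, and then to invoke the uniqueness statement in Proposition~\ref{prop: Givental-main}. The first step is to transport the Dubrovin-side R-matrix equation into the BV/PV side via Proposition~\ref{prop: connections match}(iii): conjugation by $\Phi = \Upsilon_1\Phi^\omega$ sends $\nabla^\omega$ to $\nabla^\PV$, sends $B_0 = E\circ$ to multiplication by $F$ (as recorded in Section~\ref{subsection: DC from GM}), and sends the grading operator $B_\infty$ to the weight operator $\wt$ up to the universal shift by $\tfrac{2-d}{2}$. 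Under this dictionary, the R-matrix equation $R^{-1}\widetilde\nabla^\omega_{d/dz}(RX) = dX/dz - z^{-2} B_0 X$ is equivalent to Eq.~\eqref{eq: widetilde R} for $\widetilde R = \Upsilon_1\widetilde R^o$, and the Euler-homogeneity $\wt R = 0$ corresponds to $\wt\widetilde R^o = 0$.

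The second step is to observe that $\widetilde R^o$ was engineered to solve Eq.~\eqref{eq: widetilde R}: expanding the equation against a homogeneous class $a$ order by order in $z$, the $z^1$ coefficient is the invertibility relation $F\cdot A_0 a \equiv a$ furnished by the preceding lemma, and the $z^{p+1}$ coefficient for $p\ge 1$ is precisely the recursive formula used in the text to define $B_p$ in terms of $A_0,\dots,A_k$ and $B_1,\dots,B_{p-1}$. So no further calculation is needed to verify the equation; it holds by construction in $H^\ast(\PV\otimes\O_\S[z],\overline\p_F + z\p)$.

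The third step is homogeneity: the text already records that each $A_k$ has weight $-(k+1)$, and the recursion defining $B_k$ together with $\wt F = 1 = \wt z$ forces $\wt B_k = -k$ by an immediate induction. The explicit form $\widetilde R^o(a) = a + zA_0\cdot a - \sum_{k\ge 2} z^k\wt(a) B_k(a)$ then reads off $\wt\widetilde R^o = 0$; since $(\Phi^\omega)^{-1}$ has weight $0$, the same is true of $R$. Applying the homogeneous-uniqueness half of Proposition~\ref{prop: Givental-main} at the chosen semisimple point of $\calS$ automatically yields the symplectic condition $R(z)\eta^{-1}R^T(-z) = \eta^{-1}$ and identifies $R$ with the R-matrix of $M^\omega_F$, completing the proof.

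The main obstacle I expect is the translation in the first step: one must verify carefully that conjugation by $\Phi$ intertwines $B_\infty$ with the weight operator $\wt$ on the PV side, up to the shift coming from the conformal dimension $d$ and the weights of the coordinates $s_\alpha$, and that the coefficient-wise unpacking of \eqref{eq: R-matrix main} matches the recursion used to define $\widetilde R^o$ rather than a twist of it, including the appearance of $\wt(a)$ as the eigenvalue of $m + B_\infty$ at the correct order. A secondary subtlety is checking that Eq.~\eqref{eq: widetilde R} makes sense at the level of $H^\ast(\PV\otimes\O_\S[z],\overline\p_F + z\p)$, which is guaranteed by the commutation relations $[\nabla^\PV_v,\overline\p_F + z\p] = 0$ and $[\nabla^\PV_{\p/\p z},\overline\p_F + z\p] = z^{-1}(\overline\p_F + z\p)$ recorded in Section~\ref{section: unfolding}.
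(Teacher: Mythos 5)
Your proposal is correct and follows essentially the same route as the paper: the paper's own proof likewise checks that $\widetilde R^o$ (hence $R = (\Phi^\omega)^{-1}\widetilde R^o$) is homogeneous of weight $0$, takes Eq.~\eqref{eq: widetilde R} to hold by the construction of the operators $B_k$ carried out just before the theorem, and then invokes the uniqueness part of Proposition~\ref{prop: Givental-main}. You spell out the conjugation dictionary (Proposition~\ref{prop: connections match}(iii), $B_0 \leftrightarrow F\cdot$, $B_\infty \leftrightarrow \wt$ up to the shift by $\tfrac{2-d}{2}$) in more detail than the paper's terse proof does, but that is exactly the content of the lead-up discussion in the section rather than a genuinely different argument.
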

\begin{proof}
    By the lemma above, $\widetilde R^o$ is homogeneous of degree $0$. The resolution operator $\Phi^{top}$ is homogeneous of degree $0$ by its construction. The resolution operator $\Phi^{\omega}$ is homogeneous of degree $0$ because a good basis is required to be homogeneous. We conclude that $R$ is homogeneous of degree $0$ in $H^\ast(\PV\otimes\O_\S,\overline\p_F)[z]$ as well.

    Compute the components of $\widetilde R^o$ in the basis $\phi_\alpha$.
    By its construction, the operator $R^o$ satisfies Eq.~\eqref{eq: R-matrix main} in $H^\ast(\PV\otimes\O_\S,\overline\p_F)[z]$.
    This now follows from Proposition~\ref{prop: Givental-main} that it is the R--matrix of the Dubrovin--Frobenius manifold.
\end{proof}

\begin{corollary}
    The BV--algebra $(\PV \otimes \O_\S[z],\overline\p_F,\p)$ correlators of \cite{KMS} computed in the BV--operator trivialization $\widetilde R^o \Phi^\omega$ coincide with the Saito theory correlators.
\end{corollary}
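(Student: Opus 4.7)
The plan is a three-step application of results already in the paper: Theorem~\ref{theorem: KMS different trivializations}, Theorem~\ref{theorem: R--matrix}, and Givental's reconstruction theorem for the genus zero potential of a Dubrovin--Frobenius manifold at a semisimple point.

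First, I would identify $\F^{\Phi^\omega}$ with the topological potential of the Frobenius algebra $T_\bs M_f^\omega$ at the reference point $\bs \in \S$. This follows in exactly the spirit of Section~3.4.1, where $\Phi^{top}$ was shown to produce the topological potential of $\Jac(f)$: since $\Phi^\omega = M\Phi^{top}_F$ with $\Phi^{top}_F$ the topological trivialization of $(\PV\otimes\O_\S,\overline\p_F,\p)$ built by the same recipe as $\Phi^{top}$ was for $(\PV,\overline\p_f,\p)$, its higher-order-in-$z$ components act trivially in the cohomology $H^\ast(\PV\otimes\O_\S[z],\overline\p_F + z\p)$ when paired against classes in $H^\ast(\PV\otimes\O_\S,\overline\p_F)$, collapsing the KMS formula to the topological three-point correlator of $p_\ast\O_\C$. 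The postcomposition with the basis-change $M$ identifies this Frobenius algebra with $T_\bs M_f^\omega$ via the good basis $\omega_\bullet$.

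Second, I would apply Theorem~\ref{theorem: KMS different trivializations} with $R := \widetilde R^o$. By its construction $\widetilde R^o$ commutes with $\overline\p_F$, and its cohomological avatar satisfies the symplectic condition, since by Theorem~\ref{theorem: R--matrix} it differs from the Dubrovin R--matrix of $M_f^\omega$ (symplectic by Proposition~\ref{prop: Givental-main}) only by the operator $\Phi^\omega$, which is symplectic with respect to the higher residue pairing restricted to the good basis. Consequently $\F^{\widetilde R^o \Phi^\omega}$ is obtained from $\F^{\Phi^\omega}$ by the Givental action of the cohomology class of $\widetilde R^o$.

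Third, combining Step~1 and Step~2 with Theorem~\ref{theorem: R--matrix} and Givental's reconstruction theorem (quoted immediately after Proposition~\ref{prop: Givental-main}) closes the argument: the cohomology class of $\widetilde R^o$, read in the basis $\phi_\bullet$ after absorbing $\Phi^\omega$ into the basis identification of Step~1, is the R--matrix of $M_f^\omega$, and its Givental action on the topological potential of $M_f^\omega$ at a semisimple point recovers $\F^\omega$. The main obstacle is not a new computation but the careful bookkeeping in Step~1 --- verifying that the basis $\phi_\bullet$ of $p_\ast \O_\C$, after the identifications furnished by $\Upsilon_0$, $\Upsilon_1$ and $\Phi^\omega$, genuinely corresponds to the flat basis of $M_f^\omega$ in which the R--matrix of Theorem~\ref{theorem: R--matrix} is written --- together with the symplecticity check on $\Phi^\omega$ needed to legitimize Step~2.
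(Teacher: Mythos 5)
Your proposal is correct and takes essentially the same route as the paper: the paper's own proof is a single sentence citing Theorem 6.3 of \cite{KMS} (Theorem~\ref{theorem: KMS different trivializations}), with the identification of the base potential and of the R--matrix (your Steps 1 and 3) left implicit from the immediately preceding Theorem~\ref{theorem: R--matrix} and the quoted Givental reconstruction theorem. You have simply spelled out the chain of identifications --- including the bookkeeping around $\Phi^\omega$ and the symplecticity check --- that the paper takes for granted.
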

\begin{proof}
 This follows immediately from Theorem 6.3 in \cite{KMS}.
\end{proof}

\bibliographystyle{unsrt}

\end{document}